\newtheorem{theor}{Theorem}
\newtheorem{claim}[theor]{Claim}
\theoremstyle{definition}
\newtheorem{define}{Definition}
\newtheorem{state}[theor]{Proposition}
\newtheorem{prop}[theor]{Proposition}
\newtheorem{lem}[theor]{Lemma}
\newtheorem{conjecture}[theor]{Conjecture}
\newtheorem{open}{Open problem}
\newtheorem{ex}{Example}
\theoremstyle{remark}
\newtheorem{rem}{Remark}
\def\oldvec{\mathaccent "017E\relax }
\newcommand{\Or}{{\rm O\oldvec{r}}}
\newcommand{\BBR}{\mathbb{R}}
\newcommand{\BBN}{\mathbb{N}}
\newcommand{\cX}{{\EuScript X}}    
\newcommand{\ba}{{\boldsymbol{a}}}
\newcommand{\bx}{{\boldsymbol{x}}}
\newcommand{\veps}{\varepsilon}
\newcommand{\dd}{\partial}
\newcommand{\Id}{{\mathrm d}}
\newcommand{\lshad}{[\![}
\newcommand{\rshad}{]\!]}
\DeclareMathOperator{\Alt}{Alt}
\newcommand{\jour}[1]{\textit{{#1}}}
\newcommand{\vol}[1]{\textbf{{#1}}}
\begin{document}\pagestyle{plain}\large

\title{Kontsevich graphs act on Nambu\/--\/Poisson brackets,~I.\\ 
\mbox{ }\quad New identities for Jacobian determinants}

\author{Arthemy V Kiselev, Mollie S Jagoe Brown and Floor Schipper}

\affil{Bernoulli Institute for Mathematics, Computer Science and Artificial Intelligence, University of Groningen, P.O. Box~407, 9700 AK Groningen, The Netherlands}

\email{a.v.kiselev@rug.nl, m.s.jagoe.brown@gmail.com, f.m.schipper@rug.nl}

\begin{abstract}
Nambu\/-\/determinant brackets on~$\BBR^d\ni\bx=(x^1,\ldots,x^d)$,
$\{ f,g \}_d (\bx) = \varrho(\bx)\cdot\det\bigl( \dd(f,g,a_1,\ldots,a_{d-2})\bigr/\dd(x^1,\ldots,x^d) \bigr)$, with $a_i\in C^\infty(\BBR^d)$ and $\varrho\cdot\dd_{\bx}\in\mathfrak{X}^d(\BBR^d)$, are a class of degenerate (rank${}\leqslant 2$) Poisson structures with (non)\/linear coefficients, e.g., po\-ly\-no\-mi\-als of arbitrarily high degree.
With `good' cocycles in the graph complex, Kontsevich associated universal
--\,for all Poisson bi\/-\/vectors~$P$ on affine~$\BBR^d_{\text{aff}}$\,-- elements
$\dot{P} = Q^\gamma([P]) \in \mathrm{H}^2_{P}\bigl(\BBR^d_{\text{aff}}\bigr)$ in the Lich\-ne\-ro\-wicz\/--\/Poi\-s\-son second cohomology groups;
we note that known graph cocycles~$\gamma$ preserve the Nambu\/--\/Poisson class $\bigl\{ P(\varrho,[\ba]) \bigr\}$, and we express, directly from~$\gamma$, the evolution~$\dot{\varrho}$,\ $\dot{\ba}$ that induces~$\dot{P}$.

\mbox{ }\quad
Over all $d\geqslant 2$ at once, there is no `universal' mechanism for the bi\/-\/vector cocycles~$Q^\gamma_d$ to be trivial, $Q^\gamma_d = \lshad P,\smash{\vec{X}^\gamma_d}([P]) \rshad$, w.r.t.\ vector fields defined uniformly 
for all dimensions~$d$ 
by the same graph formula.
While 
over~$\BBR^2$, the graph flows $\dot{P} = Q^{\gamma_i}_{2D} \bigl( P(\varrho) \bigr)$ for $\gamma\in\{\gamma_3,\gamma_5,\gamma_7,\ldots\}$ are trivialised by vector fields $\smash{\vec{X}^{\gamma_i}_{2D}} = (\Id x\wedge\Id y)^{-1} \Id_{\text{dR}}\bigl(\text{Ham}^{\gamma_i}(P) \bigr)$ of peculiar shape,
we detect that in $d\geqslant 3$, the 
$1$-\/vectors from~2D, now with $P(\varrho$,$a_1$,$\ldots$,$a_{d-2}) $ inside, do not solve the problems $Q^{\gamma_i}_{d\geqslant 3} = \lshad P, \smash{\vec{X}^{\gamma_i}_{d\geqslant 3}} \bigl(P(\varrho,[\ba]) \bigr) \rshad$, yet they do yield a good Ansatz where we find solutions $\smash{\vec{X}^{\gamma_i}_{d=3,4}} \bigl(P(\varrho,[\ba]) \bigr)$.
In the study of the step $d\mapsto d+1$, by adapting the Kontsevich graph calculus to the Nambu\/--\/Poisson class of brackets, we discover more identities for the Jacobian determinants within $P(\varrho,[\ba])$, i.e.\ for multivector\/-\/valued $GL(d)$-\/invariants on~$\BBR^d_{\text{aff}}$.
\end{abstract}

\large
\noindent\textbf{Introduction
.}\quad This paper begins the series of three namesake texts which are devoted to deformations of Poisson brackets --\,by using Kontsevich's graph cocycles\,-- within the class of Nambu\/-\/determinant Poisson structures on~$\BBR^d$.
In the subsequent article (II.), see~\cite{MJB}, we establish the trivialisation of the tetrahedral $\gamma_3$-\/graph flow on the space of Nambu\/--\/Poisson brackets over~$\BBR^4$, that is in dimension four (cf.~\cite{skew23} for $d=3$ and~\cite{Ascona96,Anass2017} for $d=2$).
The uniqueness of trivialising vector fields~$\smash{\vec{X}^{\gamma_3}_d}$, themselves encoded by (generalisations of) Kontsevich's graphs built of (Nambu\/--)\/Poisson bi\/-\/vectors, modulo Poisson vector fields with Hamiltonians also expressed in terms of graphs, is verified in the third article (III.), see~\cite{FS}.

Now, in this paper we summarise newly observed properties of the Nambu\/--\/Poisson brackets. We discover five
classes of differential\/-\/polynomial identities which the Jacobian determinants (and the brackets derived from them) conjecturally satisfy;
all these hypotheses are open problems about (an effective description of) multivector\/-\/valued $GL(d)$-\/invariants, $2\leqslant d<\infty$, over affine spaces~$\BBR^d$.


\begin{define}\label{DefNambuPoissonCoord}
The Nambu\/-\/determinant Poisson bracket of $f,g\in C^\infty(\BBR^d)$ is expressed by the formula
\begin{equation}\label{EqNambuCoord}
\{ f,g \}_d(\bx) = \varrho(\bx) \cdot \det\smash{\Bigl(} \dd \bigl(f,g,a_1,\ldots,a_{d-2}\bigr)
\big/ \dd \bigl(x^1,\ldots,x^d \bigr) \smash{\Bigr)}, \qquad \bx\in\BBR^d,
\end{equation}
where $a_i\in C^\infty(\BBR^d)$ are Casimirs, $\varrho(\bx)\cdot \dd_{x^1}\wedge\ldots\wedge\dd_{x^d}$ is a $d$-\/vector, and $(x^i)$~are global coordinates on~$\BBR^d$, $d\geqslant 2$.
\end{define}

\begin{rem}\label{RemCartesian}
Kontsevich's construction of the graph cocycle action on the spaces of Poisson brackets is well defined over arbitrary finite\/-\/dimensional affine manifolds ${M}_{\text{aff}}^{d<\infty}$. In our present illustration of this concept and in our study of the action specifically upon the class of Nambu\/--\/Poisson brackets, we take $M\mathrel{{:}{=}}\mathbb{R}^d$ with natural global Cartesian coordinates, e.g., denoted by $x,y,z,w$ on~$\mathbb{R}^4$. Yet of course, the term `Cartesian' serves here as the marker for an atlas of affine coordinate charts on ${M}_{\text{aff}}^d$, that is all the coordinate tuples which are obtained from a given one by using affine reparametrisations.
\end{rem}

\begin{rem}\label{RemNambuNonlin}
Over affine manifolds ${M}_{\text{aff}}^d$, the degree of polynomial functions is well defined; beyond scalar functions, this is also true in particular for the components $P^{ij}(\boldsymbol{x})$ of the Poisson tensor $P$ (provided these components are polynomial in every chart of some cover for an orientable manifold ${M}_{\text{aff}}^d$). Therefore, Nambu's formula $P=\lshad \ldots \lshad\varrho\,\partial_{\bx},a_1\rshad, \ldots a_{d-2}\rshad$ of Poisson structures $P(\varrho,[\boldsymbol{a}])$ on orientable ${M}_{\text{aff}}^d$ offers us the brackets with coefficients of arbitrarily high polynomial degree, which is achieved by taking polynomial scalar functions $a_i$ and taking the $d$-vector $\varrho(\boldsymbol{x})\cdot\partial_{\boldsymbol{x}}$ (again, a tensor) with polynomial coefficient $\varrho$ of suitable degrees. 

Let us remember also that the symplectic leaves of the Nambu--Poisson structures $P(\varrho,[\boldsymbol{a}])$ are at most of dimension two. Indeed, the leaves are selected by intersecting the level sets of the $d-2$ Casimirs. (The Euler linear bracket on $\mathfrak{so}(3)^*$, in Cartesian coordinates described by $\{x,y\}=z$ and so on cyclically, foliates $\mathbb{R}^3$ by the concentric spheres $\{(x,y,z) | a=\tfrac{1}{2}(x^2+y^2+z^2)=\tfrac{1}{2}r^2\geq0\},$ providing a typical example: at $r=0$, the zero-dimensional symplectic leaf amounts to the central point of all spheres.)
\end{rem}

\begin{rem}\label{RemNaryNambu}
Not only does the binary bracket $\{{\cdot},{\cdot}\}_d$ satisfy the Jacobi identity but also does the $N$-ary bracket,
\[
\{ f_1,\ldots,f_N\}_d(\bx) = \varrho(\bx)\cdot \det \smash{\Bigl(} \dd \bigl( f_1,\ldots,f_N, a_{N-1},\ldots,a_{d-2} \bigr) \big/ \dd \bigl( x^1,\ldots,x^d \bigr) \smash{\Bigr)},
\]
read off literally from Eq.~\eqref{EqNambuCoord} for $2\leqslant N\leqslant d$, satisfy one of the many possible $N$-ary generalizations of the Jacobi identity, `\textsl{the adjoint action is a derivation of the bracket}' (see~\cite{Nambu1973,Takhtajan1994CMP}
):
\begin{multline*}
\{f_1,\ldots,f_{N-1}, \{g_1,\ldots,g_N\}_d \}_d =
\{ \{f_1,\ldots,f_{N-1},g_1\}_d, g_2,\ldots,g_N \}_d + {} \\
\{g_1,\{f_1,\ldots,f_{N-1},g_2\}_d, g_3,\ldots,g_N \}_d + \cdots +
\{ g_1,\ldots,g_{N-1}, \{ f_1,\ldots,f_{N-1},g_N\}_d \}_d.
\end{multline*}
Let us remember that at either $N=2$ (Poisson case) or $N>2$, the Jacobi identities are quadratic in the $N$-ary structure.
\end{rem}

This paper is structured as follows.
In~\S\ref{SecMKgraphs} we recall from~\cite{Ascona96} Kontsevich's idea of acting --\,by suitable nontrivial graph cocycles~$\gamma$\,-- on the spaces of all Poisson bi\/-\/vectors~$P$ on affine manifolds of arbitrary finite dimension~$d$. We note that for the wheel\/-\/cocycle generators $\gamma_3$,\ $\gamma_5$,\ $\gamma_7$,\ $\gamma_9$,\ $\ldots$ of the Grothendieck\/--\/Teichm\"uller Lie algebra~$\mathfrak{grt}$ (see~\cite{TWGRT2015} and~\cite{JNMP2017}), the corresponding $2$-\/cocycles $\dot{P} = Q^{2\ell+1}(P)\in\ker\lshad P,{\cdot}\rshad$ are \textsl{not} trivialised by any vector fields~$\smash{\vec{X}^{\gamma_{2\ell+1}}}$ which would again be encoded by graphs and therefore, solve the trivialisation problems $Q^{\gamma} = \lshad P,\smash{\vec{X}^\gamma}\rshad$ universally over all dimensions~$d\geqslant 2$.

In~\S\ref{SecNambuMicroG} we adapt Kontsevich's graph approach to the differential calculus of multi\/-\/vectors on~$\BBR^d_{\text{aff}}$ of unspecified dimension~$d$ --- now, to the dimension\/-\/specific classes of Nambu\/-\/determinant Poisson bi\/-\/vectors $P(\varrho,[\ba])$ on affine~$\BBR^{d\geqslant 3}$. We thus work with the \textsl{Nambu micro\/-\/graphs} (see~\cite{skew23}), in which the top\/-\/degree $d$-\/vector $\varrho\cdot\dd_{\bx}$ is resolved against the Casimir(s) $\ba=(a_1$,\ $\ldots$,\ $a_{d-2})$ in each subgraph that encodes a copy of the bi\/-\/vector~$P(\varrho,[\ba])$.
We give examples of $(k\geqslant 0)$-\/vector Nambu micro\/-\/graphs which do not `equal minus themselves thanks to an automorphism' but which nevertheless encode identically vanishing $k$-\/vectors. We observe that the $(d+1)$-\/dimensional \textsl{embedding} of a Nambu micro\/-\/graph which vanished in dimension~$d$ still vanishes in dimension~$d+1$. Likewise, we see that for \textsl{synonyms}, i.e.\ for topologically nonisomorphic $(k\geqslant 0)$-\/vector Nambu micro\/-\/graphs which encode identically equal $k$-\/vectors on~$\BBR^d$, their graph embeddings over dimension $d+1$ still are synonyms; the same is conjecturally true for longer nontrivial linear combinations of micro\/-\/graph formulas: if a formula $\phi(\sum_i c_i\Gamma_i)=0 \in \mathfrak{X}^k\bigl(\BBR^d_{\text{aff}}\bigr)$ is a nontrivial relation and if $\Gamma\hookrightarrow\smash{\widehat{\Gamma}}$ is the embedding of micro\/-\/graphs, then the formula $\phi(\sum_i c_i\smash{\widehat{\Gamma}_i})=0 \in \mathfrak{X}^k\bigl(\BBR^{d+1}_{\text{aff}}\bigr)$ remains a valid relation. We seek to understand the mechanism of this preservation, under $d\mapsto d+1$, of relations for this class of multivector\/-\/valued $GL(d)$-\/invariants.

In the second part of this paper, starting in~\S\ref{SecFlows2D} we state the facts of trivialisation for Kontsevich's graph flows, $\dot{P}_{2D} = Q^\gamma_{2D}\bigl(P(\varrho)\bigr)$, over dimension $d=2$ for the $(2\ell+1)$-\/wheel 
graph cocycles $\gamma\in\bigl\{\gamma_3,\gamma_5,\gamma_7\bigr\}$ and for the Lie bracket $[\gamma_3,\gamma_5]$. We notice that not only for the tetrahedron~$\gamma_3$ but also for the larger graph cocycles, the affine spaces of trivialising vector fields $\smash{\vec{X}^\gamma_{2D}}\bigl(P(\varrho)\bigr)$ that solve $Q^\gamma_{2D}\bigl(P(\varrho)\bigr) = \lshad P, \smash{\vec{X}^\gamma_{2D}} \rshad$ do contain a Hamiltonian vector field
$\smash{\vec{X}^\gamma_{2D}} = (\Id x\wedge\Id y)^{-1} \bigl(\Id_{\text{dR}} \text{Ham}^\gamma\bigl(P(\varrho) \bigr) \bigr)$ given by the canonical symplectic structure $\omega_2 = \Id x\wedge\Id y$ on~$\BBR^2$ and by Hamiltonians $\text{Ham}^\gamma$ which, for every such graph cocycle~$\gamma$, are encoded by graphs built of wedges.
(In the subsequent papers~\cite{MJB,FS} we discover that in dimensions $d=3$ and~$4$,
solutions $\smash{\vec{X}^{\gamma_3}_{d=3,4}}$ appear over the Ansatz of linear combinations of micro\/-\/graph \textsl{descendants} of the $1$-\/vector graphs in such particular solution $\smash{\vec{X}^{\gamma_3}_{2D}} = \omega_2^{-1}\bigl(\Id_{\text{dR}} \text{Ham}^{\gamma_3} \bigr)$ --- unlike 
for most of the other graph pairs that encode the solution in dimension two.)

Finally, in~\S\ref{SecFlowsHighDim} we conjecture the formulas of velocities~$\dot{\varrho}$ and~$\dot{a}_i$, expressed directly in terms of the graph cocycles~$\gamma$, that imply (by the Leibniz rule) 
the evolution $\dot{P} = Q^\gamma\bigl(P(\varrho,[\ba]) \bigr)$ of the Nambu\/--\/Poisson structures.
By contrasting the antisymmetry of the Nambu\/-\/determinant brackets $P(\varrho,[\ba])$ w.r.t.\ the flips $a_i\mapsto -a_i$ for all $d\geqslant 3$ and w.r.t.\ permutations of the set of Casimirs~$a_i$ for $d\geqslant 4$ against the symmetry of their $\gamma$-\/flows $Q^\gamma_d(P)$, we motivate the existence of trivialisation, $Q^{\gamma_{2\ell+1}}_d = \lshad P, \smash{\vec{X}^{\gamma_{2\ell+1}}_d} \rshad$, for the $(2\ell+1)$-\/wheel graph cocycles~$\gamma_{2\ell+1}$ (and for their iterated commutators on even number of vertices, cf.~\cite{JNMP2017,TWGRT2015}).


\section{Preliminaries: Kontsevich graph cocycles act on Poisson brackets}\label{SecMKgraphs}
In the seminal paper~\cite{Ascona96}, Kontsevich introduced the graph complex action on the spaces of multivectors on affine finite\/-\/dimensional manifolds. We recall that real vector spaces of undirected finite graphs with a global ordering of edges (First${}\wedge\ldots\wedge{}$Last, quotient over the relations Edge${}_i\wedge{}$Edge${}_j=-{}$Edge${}_j\wedge{}$Edge${}_i$) are endowed with the structure of differential graded Lie (super-)\/algebra (dgLa), namely with the Lie bracket $[\cdot,\cdot]$ from the graded commutator of graph insertion into vertices and with the vertex blow-up differential $\sf{d}=[\bullet\!\text{\textbf{--}}\!\bullet,{\cdot}]$; we refer to~\cite{JNMP2017,OrMorphism2018} and references therein for all definitions and details. Graph cocycles on $n$ vertices and $2n-2$ edges are of particular interest because it is in this vertex-edge bi-grading where graph cocycles $\gamma$ can act -- by non-identically zero shifts $P\mapsto P+\varepsilon Q^{\gamma}+\overline{o}(\varepsilon)$ and in a possibly nontrivial way, $Q^{\gamma}\neq\lshad P,\vec{X}^{\gamma}\rshad$ -- on the spaces of Poisson bi-vectors $P$ over the affine manifold ${M}_{\text{aff}}^d$ at hand. Willwacher established the existence of at least countably many such cocycles (see~\cite{TWGRT2015}): the $(2\ell+1)$-wheel graph cocycles $\gamma_{2\ell+1}$, $\ell\in\mathbb{N}$, stem from the generators of Drinfeld's Grothendieck-Teichmüller Lie algebra $\mathfrak{grt}$, so that their iterated commutators stay in the good vertex-edge bi-grading and remain non-trivial cocycles in the graph complex.

\begin{ex}\label{ExGamma3579}
The smallest nontrivial graph cocycle, on $n=4$ vertices and $2n-2=6$ edges, is the 3-wheel itself: it is the tetrahedron $\gamma_3$ (the full graph on 4 vertices); it appeared already in \cite{Ascona96}. The pentagon-wheel graph cocycle $\gamma_5$, consisting of two graphs on 6 vertices and 10 edges (see Table~\ref{TabNumberGraphsInCocycles} on p.~\pageref{TabNumberGraphsInCocycles} below), was known to Kontsevich and to Willwacher; the cocycle $\gamma_5$ is described in \cite{JNMP2017}. The heptagon-wheel graph cocycle $[\gamma_7]$ was obtained in \cite{JNMP2017}; now, the space of graphs on 8 vertices and 14 edges is big enough to provide degree(s) of freedom in the cohomology class $[\gamma_7]$ due to the now-possible coboundaries $\textsf{d}(\beta)$ from graphs $\beta$ on 7 vertices and 13 edges; the shortest-known representative $\gamma_7$ of the nontrivial cohomology class $[\gamma_7]$ is a linear combination of 46 graphs. The next graph cocycle, in the vertex-edge bi-grading (9,16) immediately following (8,14) along the ray ($n,2n-2$), is represented by the commutator $[\gamma_3,\gamma_5]$; its encoding is worked out in~\cite{BuringPhD}. At the ISQS28 conference (CVUT Prague, 1--5 July 2024), R.~Buring reported a representative $\gamma_9$ of the 9-wheel graph cocycle on 10 vertices and 18 edges in each of its 13,723~terms. (As the vertex number grows, the ($n,2n-2$)-homogeneous component of the graph space can contain not just one but many nontrivial graph cocycles which, modulo the coboundaries, are linearly independent.) Let us remember also that each of these good graph cocycles $\gamma_3,\ldots,\gamma_9$ was obtained `anew', i.e.\ not --\,by following Willwacher's isomorphism\,-- from the generators of the Lie algebra~$\mathfrak{grt}$; the task of writing explicit formulas for the correspondence between $\mathfrak{grt}$~and representatives of the classes $[\gamma_{2\ell+1}]$, and of their iterated commutators, is a work in progress (M.~Kontsevich, private communication). 
\end{ex}

Graphs $\gamma$ with a global edge ordering $E(\gamma)$ are mapped to endomorphisms of the space of multivectors on ${M}_{\text{aff}}^d$ by the edge orientation morphism (see \cite{Ascona96} and \cite{f16,OrMorphism2018,sqs17}). Every directed edge $\vec{e}$, decorated with a summation index $i_e$ which runs from 1 to $d$, denotes the derivation $\partial/\partial x^{i_e}$ of the (multi)vector contained in the arrowhead vertex; the local exterior ordering of the outgoing edges, $\vec{e}_1\wedge\vec{e}_2\wedge \ldots \wedge\vec{e}_k$, which thus expresses the skew-symmetry of the $k$-vector (in the arrowtail vertex) with respect to its arguments, is inherited at every vertex from the global ordering of edges in the initially taken graph $\gamma$, where $E(\gamma)=...\wedge e_1\wedge e_2\wedge ...\wedge e_k\wedge ...$. In our present study of the graph complex action on Poisson brackets, it suffices to enlarge the graph $\gamma$ by two sink vertices and to consider only those portraits of edge direction where the new graphs, with exactly one arrow directed to either of the sinks, are built entirely of wedges $\smash{\xleftarrow{i}\!\!\bullet\!\!\xrightarrow{j}}$ (for the Poisson bi-vectors $P=(P^{ij})$ which are the building blocks).

\begin{define}\label{DefMKgraph}
    Directed graphs built over $m\geqslant 0$ sinks from $n\geqslant 1$ wedges (with local ordering Left${}\prec{}$Right for the two outgoing arrows at every wedge top) are called the \textsl{Kontsevich (di)graphs}; note that 1-loops (\textsl{tadpoles}) are allowed, although the $(2\ell+1)$-wheel graph cocycles (and their commutators) stemming from $\mathfrak{grt}$ always admit representatives $\gamma_{2\ell+1}$ (resp., $[\ldots[\gamma_{2\ell+1},\gamma_{2p+1}]\ldots]$) without 1-loops. 
\end{define}

\begin{ex}\label{ExTetra}
    By directing the four edges in the tetrahedron $\gamma_3$ in such a way that the vertices of $\gamma_3$ are the four wedge tops and the two excessive edges are sent to the two new sinks $\overline{0}$ and $\overline{1}$, we obtain --\,with multiplicities 8 and 24\,-- two topologically non-isomorphic pictures (see \cite{f16}): one is already skew over the sinks and the other, to give a bi-vector, is skew-symmetrised; this yiels $1+2=3$ Kontsevich graphs. Taken with their multiplicities $8:24=1:3$, they encode the bi-vector $Q^{\gamma_3}([P])= \Or (P\otimes P\otimes P\otimes P)$. Likewise, for the pentagon-wheel graph cocycle $\gamma_5$, we obtain the 91 bi-vectors realised by Kontsevich graphs (\cite{sqs17}), and so on (see \cite{OrMorphism2018,BuringPhD} and Table~\ref{TabNumberGraphsInCocycles}).
\begin{table}[htb]
\caption{\label{TabNumberGraphsInCocycles}
The number of (un)directed graphs in the graph cocycles~$\gamma$ and Poisson cocycles~$Q^\gamma$.}
\begin{center}
\begin{tabular}{|l|r|r|r|r|r|}
\hline
Cocycle~$\gamma$: & $\gamma_3$ & $\gamma_5$ & $\gamma_7\in[\gamma_7]$ & $[\gamma_3,\gamma_5]$ & $\gamma_9\in[\gamma_9]$ \\
\hline
\#\,vertices: & 4 & 6 & 8 & 9 & 10 \\
\#\,edges: & 6 & 10 & 14 & 16 & 18 \\
\#\,graphs in $\gamma$: & 1 & 2 & 46 & 68 & 13,723 \\
\#\,bi\/-\/vectors in $Q^\gamma$: & 2 & 91 & 20,422 & 42,252 & ? \\
\#\,directed graphs in $Q^\gamma$: & 3 & 167 & 37,185 & ? & ? \\
\hline
\end{tabular}
\end{center}
\end{table}
\end{ex}

\begin{lem}[{see \cite{Ascona96} and \cite{OrMorphism2018,TWGRT2015}}]\label{LemmaCocycleToCocycle}
    Whenever $\gamma\in\ker\textsf{d}$ is a nontrivial graph cocycle over $n$ vertices and $2n-2$ edges, and $P$ is a Poisson bi-vector on an affine manifold ${M}_{\text{aff}}^d$, the bi-vector $Q^{\gamma}([P])\mathrel{{:}{=}} \Or(P^{\otimes^n})$ is a Poisson 2-cocycle: $Q^{\gamma}([P])\in\ker\lshad P,\cdot\rshad$.
\end{lem}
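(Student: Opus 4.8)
\emph{Proof sketch.}\quad The plan is to realise the assignment $\gamma \mapsto Q^\gamma([P])$ as (part of) a morphism of differential graded Lie algebras and then to invoke the cocycle condition $\mathsf{d}\gamma = 0$. Concretely, recall that the edge\/-\/orientation morphism $\Or$ sends an undirected graph on $n$ vertices and $2n-2$ edges to the multilinear operator $P_1\otimes\cdots\otimes P_n \mapsto \Or(\gamma)(P_1\otimes\cdots\otimes P_n)$ obtained by summing over all ways of directing the edges so that each of the $n$ internal vertices becomes a wedge top and the two remaining arrowheads hit the two sink vertices $\bar0,\bar1$, reading each directed edge as a partial derivative acting on the tensor at its head and each wedge as the bi\/-\/vector at its tail; antisymmetrising over $\bar0\leftrightarrow\bar1$ turns the resulting bidifferential expression into a genuine bi\/-\/vector $Q^\gamma([P]) = \Or(P^{\otimes n})$ on $M^d_{\text{aff}}$. (One first checks that $\Or$ descends to the quotient by the edge\/-\/ordering relations: a graph admitting an orientation\/-\/reversing automorphism of its edge set is sent to zero, in agreement with the relation $\gamma = -\gamma$.)

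First I would compute the Schouten bracket $\lshad P, Q^\gamma([P]) \rshad$ directly from the graph expansion. By the graded Leibniz rule, bracketing in the extra copy of $P$ produces two families of terms: (i) those in which the two derivatives carried by the new $P$ land on \emph{distinct} tensor factors of $Q^\gamma([P])$ --- equivalently, those obtained by inserting a new wedge\/-\/vertex into the graph along an existing edge or at a vertex, that is, by the vertex blow\/-\/up operation $\mathsf{d}=[\bullet\text{\textbf{--}}\bullet,\,\cdot\,]$ --- and (ii) those in which both derivatives of the new $P$ (or of an already\/-\/present $P$) hit the \emph{same} bi\/-\/vector factor, so that a subexpression $\lshad P,P\rshad$ is formed. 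A bookkeeping of the local edge\/-\/orderings and of the combinatorial multiplicities shows that family~(i) reassembles, up to an overall nonzero rational constant and sign, into $\Or\bigl((\mathsf{d}\gamma)(P^{\otimes(n+1)})\bigr)$, while family~(ii) is a sum of graphs each of which carries the factor $\lshad P, P\rshad = 0$ and therefore vanishes because $P$ is Poisson.

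Granting the identity $\lshad P, \Or(\gamma)(P^{\otimes n})\rshad = \text{const}\cdot \Or\bigl((\mathsf{d}\gamma)(P^{\otimes(n+1)})\bigr)$ --- which holds for every bi\/-\/vector $P$ with $\lshad P,P\rshad=0$ --- the Lemma is immediate: since $\gamma\in\ker\mathsf{d}$ we have $\mathsf{d}\gamma=0$, whence $\lshad P, Q^\gamma([P])\rshad = 0$, i.e.\ $Q^\gamma([P])\in\ker\lshad P,\cdot\rshad$ is a Lichnerowicz\/--\/Poisson $2$\/-\/cocycle. (Nontriviality of $\gamma$ in the graph complex plays no role for this conclusion; it only matters for the companion question, not asserted here, of whether the cocycle $Q^\gamma([P])$ is a Poisson coboundary.)

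The main obstacle is the second step: matching the signs and the combinatorial coefficients so that the ``attachment'' terms~(i) are \emph{exactly} the image under $\Or$ of the graph differential applied to $\gamma$, and confirming that the ``diagonal'' terms~(ii) are \emph{exactly} the graphs carrying a $\lshad P,P\rshad$ sub\/-\/bivector with no leftover contributions. This is where the conventions for the global edge ordering $E(\gamma)$, its inheritance at each wedge top as a local Left${}\prec{}$Right ordering, and the $\bar0\leftrightarrow\bar1$ antisymmetrisation all have to be reconciled. Once the orientation morphism is known to intertwine $\mathsf{d}$ with $\lshad P,\cdot\rshad$ modulo the Jacobi identity for $P$ --- which is precisely the content established in \cite{Ascona96} and \cite{OrMorphism2018,TWGRT2015} --- the statement follows with no further work.
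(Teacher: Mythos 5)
Your proposal is correct and follows essentially the same route as the paper: the paper gives no in-text proof of this lemma, deferring exactly to the intertwining property of the orientation morphism, $\lshad P,\Or(\gamma)(P^{\otimes n})\rshad = \Or(\textsf{d}\gamma)(P^{\otimes (n+1)})$ modulo terms proportional to the Jacobiator $\tfrac{1}{2}\lshad P,P\rshad$, which is the content of \cite{Ascona96} and \cite{OrMorphism2018,TWGRT2015}, and your sketch reconstructs precisely that argument while (like the paper) leaving the sign and multiplicity bookkeeping to those references. Your closing observation that the nontriviality of $\gamma$ plays no role in the cocycle conclusion is likewise accurate.
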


\begin{claim}\label{ThNoTrivViaGraphs}
    Over all affine Poisson manifolds (${M}_{\text{\textup{aff}}}^{d<\infty},P$) at once, the $(2\ell+1)$-wheel graph cocycle deformations $\dot{P}=Q^{\gamma_{2\ell+1}}(\smash{P^{\otimes^{2\ell+2}}} )$ cannot be Poisson coboundaries `universally' over $d\geqslant 2$ with respect to always the same linear combinations $X^{\gamma_{2\ell+1}}$ of Kontsevich 1-vector graphs built of $n=2\ell+1$ wedges. Specifically, there is no solution~$\Diamond$ --\,at the level of Formality graphs from~\textup{\cite{MK97}}\,-- to the equation 
\[    
Q^{\gamma_{2\ell+1}} - \lshad P,\text{ any 1-vector graphs on }2\ell+1\text{ wedges }\rshad=\Diamond\bigl(P,\tfrac{1}{2}\lshad P,P\rshad\bigr),
\]
where the right-hand side encodes bi-vectors that vanish by force of the Jacobi identity for the Poisson structure $P$.
\end{claim}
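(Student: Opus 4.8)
The plan is to argue by contradiction and to lift the hypothetical trivialisation from the level of polydifferential operators on $\BBR^d_{\text{aff}}$ up to the Kontsevich graph complex, where Willwacher's theorem forbids it. Suppose that for some $\ell\geqslant1$ there existed a fixed linear combination $X^{\gamma_{2\ell+1}}$ of Kontsevich $1$-\/vector graphs on $2\ell+1$ wedges, together with a Formality\/-\/graph expression $\Diamond$, such that
\[
Q^{\gamma_{2\ell+1}} - \lshad P, X^{\gamma_{2\ell+1}}\rshad = \Diamond\bigl(P,\tfrac{1}{2}\lshad P,P\rshad\bigr)
\]
holds for every Poisson bi\/-\/vector $P$ on every affine ${M}_{\text{aff}}^{d<\infty}$. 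Because both sides are produced by one and the same graph formula, valid in all dimensions at once, this is an identity between two elements of the space of $\Or$-\/oriented graph cochains --- built of copies of~$P$ and, on the right\/-\/hand side, of the distinguished aerial input $\tfrac{1}{2}\lshad P,P\rshad$ --- taken modulo the kernel of the operator\/-\/realisation map into $\mathfrak{X}^{\bullet}\bigl(\BBR^d_{\text{aff}}\bigr)$.

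Next I would pass to the quotient by the Jacobi relation: setting the aerial input $\lshad P,P\rshad$ to zero kills $\Diamond$ and turns $\lshad P,{\cdot}\rshad$ into a genuine square\/-\/zero differential, namely the Lichnerowicz\/--\/Poisson differential. The surviving identity says that the cocycle $Q^{\gamma_{2\ell+1}}\in\ker\lshad P,{\cdot}\rshad$ of Lemma~\ref{LemmaCocycleToCocycle} is a \emph{graph} coboundary $\lshad P, X^{\gamma_{2\ell+1}}\rshad$, uniformly in $(d,P)$. Since $\gamma\mapsto Q^\gamma$ is a morphism of complexes from $\bigl(\mathrm{GC}_2,\textsf{d}=[\bullet\!\text{\textbf{--}}\!\bullet,{\cdot}]\bigr)$ to the Lichnerowicz\/--\/Poisson complex, and --- this is Kontsevich's formality mechanism, made precise by Willwacher, see~\cite{MK97,TWGRT2015} --- its universal realisation is injective on cohomology in the relevant bi\/-\/grading, a uniform graph trivialisation would force $[\gamma_{2\ell+1}]=0$ in $H^0(\mathrm{GC}_2)$. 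But $H^0(\mathrm{GC}_2)\cong\mathfrak{grt}_1$ and $[\gamma_{2\ell+1}]$ is the nonzero $(2\ell+1)$-\/wheel generator of $\mathfrak{grt}_1$; this contradiction proves the Claim. The same reasoning applies verbatim to the iterated commutators $[\ldots[\gamma_{2\ell+1},\gamma_{2p+1}]\ldots]$ that stay in the good bi\/-\/grading.

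The step I expect to be the real obstacle is the one easiest to underestimate: the faithfulness statement --- that a linear combination of graph cocycles acting as a Lichnerowicz\/--\/Poisson coboundary \emph{uniformly} in $(d,P)$ must already be a graph coboundary modulo the Jacobi ideal. Concretely one has to exclude ``exotic'' universal relations among $\Or$-\/graph operators in this bi\/-\/grading, beyond those coming from vertex automorphisms / edge skew\/-\/symmetry and from the Jacobiator $\tfrac{1}{2}\lshad P,P\rshad=0$ --- which is precisely the subtlety that the body of this paper probes, for Nambu micro\/-\/graphs, in~\S\ref{SecNambuMicroG}. I would establish it via the comparison of $\mathrm{GC}_2$ with the deformation complex of Kontsevich's Formality $L_\infty$-\/morphism (equivalently, of $\mathcal{T}_{\text{poly}}$ as a homotopy Gerstenhaber algebra), where the required injectivity is available; and, as an independent check of the smallest cases $\gamma_3,\gamma_5$, I would verify by direct computation over $\BBR^2$ that no linear combination of the corresponding $1$-\/vector wedge graphs solves $Q^{\gamma_{2\ell+1}}_{2D}=\lshad P,{\cdot}\rshad$ --- the trivialisation there genuinely requiring the non\/-\/graph factor $(\Id x\wedge\Id y)^{-1}$, so that the $2$D solution of ``peculiar shape'' cannot be a Kontsevich $1$-\/vector graph.
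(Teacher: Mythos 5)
Your argument delegates all the work to the assertion that the universal realisation of graph cocycles is ``injective on cohomology in the relevant bi-grading'', i.e.\ that a trivialisation $Q^{\gamma_{2\ell+1}}=\lshad P,X\rshad \bmod \Diamond\bigl(P,\tfrac12\lshad P,P\rshad\bigr)$ holding uniformly in $(d,P)$ would force $[\gamma_{2\ell+1}]=0$ in $H^0(\mathrm{GC}_2)$. That faithfulness is not an available citation: Willwacher's theorem in~\cite{TWGRT2015} identifies $H^0(\mathrm{GC}_2)$ with $\mathfrak{grt}_1$ and guarantees the nontriviality of $[\gamma_{2\ell+1}]$ in the \emph{graph} complex, but it does not say that the induced bi-vectors $Q^{\gamma}=\Or(\gamma)\bigl(P^{\otimes^{n}}\bigr)$ cannot be universal coboundaries modulo the Jacobiator --- that statement \emph{is} the Claim, so your reduction is circular unless the injectivity is proved separately, and you yourself flag it as ``the real obstacle'' without supplying an argument (the appeal to the deformation complex of the Formality morphism is a plan, not a proof). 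The paper's route is different and much more elementary: the differential calculus of graphs (expanding the Jacobiator, letting arrows act by the Leibniz rule inside the Schouten bracket) neither creates nor destroys tadpoles, so the linear trivialisation problem at the level of Formality graphs is filtered by the tadpole count; since $Q^{\gamma_{2\ell+1}}$ is tadpole-free, the equation restricts to the tadpole-free layer, and there the obstruction is exhibited directly for every $\ell$.

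Your proposed ``independent check'' is also factually off, and it points at the same misunderstanding. Over $\BBR^2$ the flows \emph{are} trivialised by honest linear combinations of Kontsevich $1$-vector graphs: the sunflower $X^{\gamma_3}_{d=2}=\Gamma'+2\Gamma''$ of Eq.~\eqref{EqSunflower} is built of three wedges (with a tadpole), and the presentation $\smash{\vec{X}^{\gamma}_{d=2}}=\omega_2^{-1}\bigl(\Id_{\text{dR}}\text{Ham}^{\gamma}\bigr)$ is a reformulation of that same graph-encoded field, not evidence that no graph solution exists in $2$D (cf.\ Propositions~\ref{PropG3triv2D} and~\ref{PropG5triv2D}). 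The content of the Claim is the impossibility of one and the same graph formula working over all $d\geqslant2$ at once, not its failure at $d=2$; a $2$D computation of the kind you describe would find solutions and hence cannot certify anything. Finally, note that the $2$D solutions necessarily contain a tadpole, whereas $Q^{\gamma_{2\ell+1}}$ has none --- it is precisely this tadpole bookkeeping, which your argument never tracks, that the paper uses to localise the universal obstruction.
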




\begin{proof}[Sketch of the proof]
Tadpoles are neither produced nor destroyed by the differential calculus of graphs (when the Jacobiator is expanded by definition and when an arrow works over the vertices of a (sub)graph by the Leibniz rule, e.g., during the calculation of the Schouten bracket $\lshad{\cdot},{\cdot}\rshad$).
Therefore, the linear problem of $\gamma_{2\ell+1}$-\/deformation's trivialisation at the level of Formality graphs is filtered by the number of tadpoles in a graph.

We recall that by construction, there are no tadpoles in the 
inhomogeneity, 
$\dot{P} = \Or(\gamma_{2\ell+1})\bigl(P^{\otimes^{2\ell+2}}\bigr)$.
To establish the absence of universal trivialisation, it 
suffices to inspect the $0$th layer of the problem 
with Formality graphs 
{without tadpoles}; here, the obstruction is easily attained at all~$\ell\in\BBN$.
\end{proof}




Let us remember that over every affine Poisson manifold ${M}_{\text{aff}}^{d<\infty}$ of any finite dimension $d\geqslant 2$, each Kontsevich graph gives us a well\/-\/defined $k$-vector (that belongs --\,possibly, after due antisymmetrisation\,-- to the space $\mathfrak{X}^k({M}_{\text{aff}}^d)$); the formula of that $k$-vector behaves well under affine coordinate reparametrisations: the shifts are not felt at all, whereas the linear transformations from $GL(d<\infty)$ are absorbed by the reparametrised copies of the Poisson tensor in the vertices of Kontsevich graphs. Yet it does occur that topologically nonisomorphic Kontsevich graphs of equal arity (e.g., 1-vectors) and with equal number of vertices (hence of equal polynomial degree in the coefficients of $P=(P^{ij})$ or their derivatives) encode linearly dependent $k$-vector formulas in a given dimension $d<\infty$. That is, the projections of universally defined $GL(\infty)$-invariants (encoded by Kontsevich graphs) to $GL(d)$-invariants become constrained by linear relations. 

\begin{ex}[{see Claim~2 in~\cite{FS}}]\label{ExSynonymsOneVect}
The 14 admissible non-isomorphic 1-vector Kontsevich graphs built of 3 wedges over one sink evaluate, in dimension $d=2$, to only three linearly independent formulas of vector fields on~$\mathbb{R}^2$.
\end{ex}

In what follows, by evaluating Kontsevich \textsl{nonzero} graphs to the respective formulas in finite dimensions, we shall encounter (\textit{i}) instant vanishings: $\phi(\Gamma)\equiv0$ for $\Gamma\neq-\Gamma$, for a single graph $\Gamma$; (\textit{ii}) \textsl{longer} linear relations that involve three or more graphs. By construction, these identities are dimension\/-\/dependent: besides, identities can be specific to the Nambu--Poisson class of bi-vectors over dimension $d<\infty$, that is not hold for arbitrary Poisson bi-vectors~$P$.

\section{Basic concept: Nambu micro\/-\/graphs over~$\mathbb{R}^d_{\text{\textmd{aff}}}$}\label{SecNambuMicroG}

\begin{define}\label{DefNambuMicroGraph}
The \textsl{Nambu graph} over dimension $d$ (here $3\leqslant d <\infty$) is the directed graph consisting of the source vertex (containing the $d$-vector coefficient $\varrho(\boldsymbol{x})\cdot\varepsilon^{i_1 ...i_d}$) from which run $d$ arrows (decorated with the summation indices $i_1,\ldots,i_d$); by convention, the 3rd, $\ldots$, $d$th arrows head to the terminal vertices with the respective Casimirs $a_1,\ldots,a_{d-2}$, whereas the 1st and 2nd arrow, ordered Left${}\prec{}$Right as usual, encode the derivations of the arguments of the Nambu\/--\/Poisson bi-vector $P(\varrho,[\boldsymbol{a}])$ from Eq.~\eqref{EqNambuCoord}. From the definition of the Levi-Civita symbol $\varepsilon^{i_1\ldots i_d}$ it follows that the $d$-tuple of outgoing arrows is wedge\/-\/ordered: a swap of any two arrows reverses the sign in front of the Nambu graph. 

Nambu graphs, each realising a copy of Nambu\/--\/Poisson bracket~\eqref{EqNambuCoord}, are the building blocks (i.e.\ subgraphs) in the \textsl{Nambu micro\/-\/graphs} over $m\geqslant 0$ sinks.\footnote{\label{FootNambuTadpoles}%
We consider only finite Nambu micro\/-\/graphs; note also that 1-loops are allowed in Nambu micro-graphs.}
\end{define}

\begin{ex}\label{ExNambuMicroG}
Let $d=3$; let 0,1,2 be the sinks, 3 and 4 be the Levi-Civita vertices, and 5,6 be the Casimir vertices. Then the digraph\footnote{\label{FootNotationNambuG}%
We list the target vertices of the ordered $d$-tuples of arrows issued from the Levi-Civita vertices, themselves ordered by a given vertex labelling.} 
$\Gamma_1=[0,1,5;2,5,6]$ is a Nambu micro-graph.\\[0.5pt]
\mbox{ }$\bullet$\quad  Let $d=3$; let 0 and 1 be the sinks, 2 and 3 be the Levi-Civita vertices, and 4,5 be the Casimirs. Then the digraph $\Gamma_2=[0,1,4;3,4,5]$ is a Nambu micro-graph (with a 1-loop on vertex 3).\\[0.5pt]
\mbox{ }$\bullet$\quad  Let $d=3$; let 0 and 1 be the sinks, 2 and 3 be the Levi-Civita vertices, and 4,5 be the Casimirs. Then the digraph $\Gamma_3=[0,1,4;2,4,5]$ is a Nambu micro-graph.\\[0.5pt]
\mbox{ }$\bullet$\quad  Let $d=3$; let 1 and 2 be the Levi-Civita vertices and 3,4 be the Casimirs; then the digraph $\Gamma_4=[1,2,4;1,2,4]$ is \textsl{not} a Nambu micro-graph (because it is not built from the Nambu (sub)graphs: its vertices and edges are not organised into a union of whole copies of the Nambu\/--\/Poisson structure over~$d=3$.
\end{ex}

\begin{rem}\label{RemNambuFromMK}
    Whenever the Poisson bracket at hand is Nambu (from Eq.~\eqref{EqNambuCoord}), linear combinations of Nambu micro-graphs can be obtained by magnifying the internal vertices of a Kontsevich graph under a microscope that resolves the elements $\varrho$ against each of the Casimirs $a_1,\ldots,a_{d-2}$ in the Nambu\/--\/Poisson bi-vector. Every arrow which hit $P$ in the Kontsevich graph now works over the elements of $P(\varrho,[\boldsymbol{a}])$ by the Leibniz rule. The Left${}\prec{}$Right ordering of the edge pairs from every wedge $\smash{\xleftarrow{L}\!\!\bullet\!\!\xrightarrow{R}}$ for~$P$ is now inherited by the 1st and 2nd arrows in the $d$-tuple issued from the respective Levi-Civita vertex. However, not all Nambu micro-graphs are obtained by such Leibniz rule expansions (in particular, when some of the terms from these expansions are omitted -- but not only then).
\end{rem}

\begin{define}\label{DefMKmicroG}
The \textsl{Kontsevich micro\/-\/graph} over dimension $d$ is the (linear combination of) Nambu micro-graph(s) which is obtained from (a linear combinations of) Kontsevich's graphs by postulating the bi-vector~$P$ to be Nambu\/--\/Poisson, $P=P(\varrho,[\boldsymbol{a}])$ over $d\geqslant 3$, and then by working out all the Leibniz rules for each of the edges which acted on the vertices containing~$P$ in the originally taken Kontsevich graph(s).
\end{define}

\begin{ex}\label{ExMKmicroG}
Let $d=3$; let 0,1 be the sinks, 2 and 3 be the Levi-Civita vertices, and 4,5 be the Casimirs. Then the sum of digraphs $[0,1,4;2,3,5]+[0,1,4;4,3,5]$ is a Kontsevich micro-graph.\\[0.5pt]
\mbox{ }$\bullet$\quad  But $\Gamma_3\neq0$ from Example~\ref{ExNambuMicroG} is \textsl{not} a Kontsevich micro-graph --- because if it were, it would be obtained from a Kontsevich graph with a double edge; that Kontsevich graph would therefore be \textsl{zero}, i.e.\ equal to minus itself, whereas $\Gamma_3\neq0$ over~$d=3$.
\end{ex}

\begin{prop}\label{PropExistNonzeroVanishingG}
There exist nonzero but still vanishing (micro-)\/graphs.
\end{prop}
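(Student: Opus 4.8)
The plan is to prove this by producing an explicit witness: a single Nambu micro\/-\/graph $\Gamma$ over a fixed (small) dimension $d$ --- in the spirit of the $\Gamma_i$ of Example~\ref{ExNambuMicroG} --- for which I verify two independent facts. First, $\Gamma$ is \emph{nonzero as a graph}: no automorphism of the edge\/-\/ordered digraph underlying $\Gamma$ acts on the global wedge ordering $E(\Gamma)$ by an odd permutation, so that $\Gamma\neq-\Gamma$ in the vector space of Nambu micro\/-\/graphs. This is a finite combinatorial check --- enumerate $\mathrm{Aut}(\Gamma)$ and read off the induced sign on $E(\Gamma)$ --- and for the small graph I have in mind that group will be trivial, or will act only by even edge permutations. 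Second, the multivector $\phi(\Gamma)\in\mathfrak{X}^k\bigl(\BBR^d_{\text{aff}}\bigr)$ that $\Gamma$ encodes vanishes identically, $\phi(\Gamma)\equiv0$.

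For the second fact the vanishing comes from an antisymmetric\/--\/symmetric clash. When $\phi(\Gamma)$ is written out in full --- expanding each Nambu vertex into $\varrho\cdot\varepsilon^{i_1\cdots i_d}$, running all Leibniz rules along the edges, summing every index over $1,\ldots,d$, and (for $k\geqslant2$) antisymmetrising over the sink legs --- some Levi\/-\/Civita tensor ends up contracted, over a pair $p,q$ of its mutually antisymmetric slots, against a tensor that happens to be symmetric in exactly $p\leftrightarrow q$; the $\varepsilon$ then kills it slot by slot. Over $\BBR^3$, where only the single Casimir $a_1$ is available, I would arrange this by forcing $a_1$ onto two distinct Casimir vertices and routing two derivations into those copies so that a factor of the shape $\varepsilon^{\cdots p\cdots q\cdots}\,(\partial_p a_1)(\partial_q a_1)$ appears --- which is $0$ termwise; an alternative realisation makes $\phi(\Gamma)$ symmetric under a transposition of two of the $k$ sink arguments, so that the skew\/-\/symmetrisation producing the $k$\/-\/vector annihilates it. Either way, confirming $\phi(\Gamma)\equiv0$ is a finite explicit computation.

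The real content --- and the main obstacle --- is to make this vanishing \emph{not} be a formal one. Every cheap way of landing on the zero multivector is already a sign\/-\/reversing automorphism and hence already forces $\Gamma=-\Gamma$: a double edge (two parallel arrows $A\to B$) admits the swap of those two arrows, a transposition in $E(\Gamma)$ of sign $-1$ (compare the remark on $\Gamma_3$ in Example~\ref{ExMKmicroG}); a pair of parallel index slots running to two interchangeable vertices is such an automorphism as well. So the construction must hide the cancellation from the combinatorics: the two clashing index slots have to sit on two \emph{different} Nambu (Levi\/-\/Civita) vertices, or the two Casimir vertices that carry the same function only because $d$ is small must be distinguished by a further incident edge, so that no edge\/-\/preserving permutation interchanges them; in the sink\/-\/symmetry variant, the symmetry of $\phi(\Gamma)$ in two of its arguments must fail to be induced by any automorphism of $\Gamma$. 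Tuning the combinatorics so that $\Gamma$ is genuinely asymmetric while the expanded tensor still collapses, and then verifying both facts, is the delicate step. Once one such $\Gamma$ over $\BBR^{d_0}$ is exhibited, the embedding observation of \S\ref{SecNambuMicroG} (a Nambu micro\/-\/graph that vanishes over $\BBR^{d}$ still vanishes over $\BBR^{d+1}$) shows the phenomenon persists in all higher dimensions, although that goes beyond what the bare existence statement needs.
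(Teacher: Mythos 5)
There is a genuine gap: the statement is a pure existence claim, and the paper proves it by actually exhibiting witnesses --- Example~\ref{ExNonzeroVanishing} lists nine nonzero vanishing $1$-vector Nambu micro-graphs (among the 41 three-wedge descendants of the `sunflower' over $d=3$, verified in \cite[Lemma~2]{MJB}), 54 such micro-graphs over $d=4$, and the unique nonzero vanishing Hamiltonian $H^{\equiv0}_{d=4}=[1,2,3,5;3,4,5,6]$ among the 21 two-Nambu Hamiltonians over $d=4$ (\cite[Lemma~16]{FS}). Your proposal has the same spirit (produce a witness, check $\Gamma\neq-\Gamma$ combinatorially, check $\phi(\Gamma)\equiv0$ analytically), but it never names a graph: ``the small graph I have in mind'' is left unspecified, and you yourself flag the construction-and-verification as ``the delicate step'' and defer it. Since that step \emph{is} the proof, what you have written is a plan, not an argument establishing existence.

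Moreover, the one concrete mechanism you sketch over $\BBR^3$ cannot be realised in a bona fide Nambu micro-graph. To get a termwise factor $\varepsilon^{\cdots p\cdots q\cdots}(\partial_p a_1)(\partial_q a_1)$ you need two Casimir vertices, each of in-degree exactly one, both hit by the \emph{same} Levi-Civita vertex. But by Definition~\ref{DefNambuMicroGraph} every Casimir vertex already receives the dedicated arrow from its own Levi-Civita vertex, and over $d=3$ each Levi-Civita vertex owns only one Casimir; so the second $a_1$-vertex you aim at belongs to another copy of $P$ and necessarily carries a second derivative, destroying the termwise symmetry in $p\leftrightarrow q$. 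The only way to contract two first-order slots of one $\varepsilon$ into $a_1$ is to send both arrows into the same Casimir vertex --- a double edge, i.e.\ exactly the ``cheap'' case $\Gamma=-\Gamma$ you rightly exclude. This is why the genuinely nonzero vanishing micro-graphs vanish by cancellations across the summed terms (or only after skew-symmetrisation over the sinks), not slot-by-slot, and why the paper's witnesses are obtained by exhaustive search and direct evaluation rather than by a local $\varepsilon$-clash; note also that over $d=2,3$ there are \emph{no} vanishing Hamiltonians built of two (Nambu--)Poisson structures, so the smallest witnesses are larger than your sketch anticipates. To repair the proof, cite or reproduce one explicit witness --- e.g.\ $H^{\equiv0}_{d=4}$ above, or one of the nine nonzero vanishing $1$-vector micro-graphs over $d=3$ --- together with its (computer-assisted) verification of both properties; your closing remark that embeddings preserve the vanishing is consistent with Proposition~\ref{PropEmbed} but is not needed for the bare existence statement.
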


\begin{ex}\label{ExNonzeroVanishing}
There are twelve vanishing Nambu micro-graphs (of them, three are zero and nine nonzero) within the set of 41 Nambu 1-vector micro-graphs, built of three Nambu (sub)graphs, which show up in the Kontsevich micro-graph expansion over $d=3$ of the two `sunflower' graphs $\Gamma',\Gamma''$, see Eq.~\eqref{EqSunflower} below, 
whose linear combination $X_{2D}^{\gamma_3}$ sufficed to trivialise the tetrahedral $\gamma_3$-flow on the space of (Poisson) bi-vectors in dimension two (cf.~\cite{Ascona96} and \cite{Anass2017,f16,skew21,skew23}, also~\cite{MJB,FS}); now over $d=3$, these twelve (non)zero vanishing micro-graphs are listed in \cite[Lemma~2]{MJB}.\\[0.5pt]
\mbox{ }$\bullet$\quad  Again, among the 324 one-vector nambu micro-graphs which show up in the Kontsevich micro-graph expansion --\,now over $d=4$\,-- of the `sunflower' graphs, there are 54 vanishing micro-graphs (see the Appendix in \cite{MJB}).\\[0.5pt]
\mbox{ }$\bullet$\quad  Among the 21 Hamiltonians (i.e.\ 0-vector Nambu micro-graphs, without sinks) built of two Nambu structures over dimension $d=4$, there is a unique nonzero vanishing graph $H_{d=4}^{\equiv0}=[1,2,3,5;3,4,5,6]$ (here 1,2 are the Levi-Civita vertices, 3 and 4 are the Casimirs $a_1$, and 5,6 are the Casimirs $a_2$, 
see~\cite[Lemma~16]{FS}). --- In lower dimensions $d=2,3$, there are no vanishing Hamiltonians built of two (Nambu--)\/Poisson structures.
\end{ex}

\begin{define}\label{DefEmbed}
Consider a (micro-)\/graph~$\Gamma$ built from Nambu\/-\/Poisson bi\/-\/vector subgraphs over~$\BBR^d$, with copies of $\varrho\cdot\veps^{\vec{\imath}}$ and `their own' Casimirs $a_1$,\ $\ldots$,\ $a_{d-2}$ in different vertices.
Now over~$\BBR^{d+1}$, let every Levi\/-\/Civita vertex $\varrho\cdot\veps^{i_1\ldots i_{d+1}}$ send a new arrow to a new terminal vertex (with `Levi\/-\/Civita's 
own' new Casimir~$a_{d-1}$) of in\/-\/degree${}\equiv 1$; that is we \textsl{embed} $\Gamma\hookrightarrow\smash{\widehat{\Gamma}}$ such that no Leibniz rules are reworked.
\end{define}

Note that in the resulting micro\/-\/graph $\smash{\widehat{\Gamma}}$ with edges decorated by summation indices, the value $d+1$ of the index on every new edge reproduces the formula of~$\Gamma$ times $\bigl(\dd a_{d-1} / \dd x^{d+1} \bigr)^p$, with the power $p=\#\varrho$ in~$\Gamma$ --- yet, in the course of summation, there appear cross\/-\/terms with $\dd a_{d-1} / \dd x^i$ with~$i\leqslant d$.

\begin{state}\label{PropEmbed}
The only vanishing Hamiltonian $H^{\equiv0}_{d=4}(P\otimes P)=0$ over~$\BBR^4$, when embedded into dimension five, remains vanishing: $\smash{\widehat{H}}{}^{\equiv0}_{d=5}(P\otimes P)=0$.\\[0.5pt]
\mbox{ }$\bullet$\quad  The embedding into dimension four remains vanishing for each of the twelve vanishing $1$-vector descendants (in dimension three) of the two `sunflower' graphs~$\Gamma'$,\ $\Gamma''$ from
~\eqref{EqSunflower}.
\end{state}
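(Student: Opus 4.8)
The plan is to exploit the structure of the embedding $\Gamma\hookrightarrow\widehat{\Gamma}$ (Definition~\ref{DefEmbed}) together with the Leibniz-rule stability emphasised in the paragraph after that definition. Fix one of the vanishing micro-graphs $\Gamma$ built of $k$ Nambu subgraphs over $\BBR^d$ (so $k=2$ for the Hamiltonian $H^{\equiv0}_{d=4}$, and $k=3$ for the twelve $1$-vector sunflower descendants over $\BBR^3$). Decorate every edge of $\widehat{\Gamma}$ by a summation index running $1,\ldots,d+1$, and split the total sum over the values of the $k$ \emph{new} indices $i^{(1)}_{d},\ldots,i^{(k)}_{d}$ attached to the $k$ new arrows $\varrho\cdot\veps^{i_1\ldots i_{d+1}}\to a_{d-1}$. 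The first step is to observe that in each resulting partial sum, the antisymmetry of every Levi--Civita symbol $\veps^{i_1\ldots i_{d+1}}$ forces the $k$ new indices to take \emph{pairwise distinct} values (within each Nambu block the new index must differ from that block's other $d$ indices, but across blocks nothing is forced, so the real content is the intra-block distinctness combined with a counting argument below). I would organise the sum by the partition of $\{1,\ldots,d+1\}$ into the `old' index range that survives in a given term versus the at most one value $\alpha\in\{1,\ldots,d+1\}$ that is `frozen' by the new edges.

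The key reduction is then: in every term of the expansion of $\widehat{\Gamma}$, choose the value $\alpha$ that is \emph{not} used by the first Nambu block's $d+1$ indices. Relabelling the $d+1$ coordinate directions by an element of $GL(d+1)$ (permuting $\alpha$ to the last slot), and using $GL(d)$-invariance of the micro-graph formula (the reparametrised copies of $\varrho$ absorb the linear change, exactly as for ordinary Kontsevich graphs — cf.\ the paragraph before Example~\ref{ExSynonymsOneVect}), each such block of terms is, up to an overall factor $(\dd a_{d-1}/\dd x^{\alpha})^p$ with $p=\#\varrho$, the evaluation over $\BBR^d$ (coordinates $x^i$, $i\neq\alpha$) of the \emph{original} graph $\Gamma$ — which vanishes by hypothesis. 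The cross-terms, in which the new index $d+1$ spreads derivatives $\dd a_{d-1}/\dd x^i$ with $i\leqslant d$ onto several vertices, must be handled separately: here I would argue that such a cross-term is itself a micro-graph formula over $\BBR^d$ (after the same $GL$-normalisation) in which one Casimir of each Nambu block has been replaced by a \emph{common} extra function $a_{d-1}$, i.e.\ it is a \emph{specialisation} of a relation among $1$-vector (resp.\ $0$-vector) micro-graphs over $\BBR^d$ with $d-1$ Casimirs plus one repeated one; one then shows this specialised combination is a consequence of the same vanishing that kills $\Gamma$, because the vanishing of $\Gamma$ over $\BBR^d$ is, by Remark~\ref{RemNambuNonlin} and the $GL(d)$-invariance, a differential-polynomial identity valid for \emph{arbitrary} smooth $\varrho,a_1,\ldots,a_{d-2}$, hence stays true when one further argument is adjoined and identified.

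For the two concrete assertions I would then just run this machine. For $H^{\equiv0}_{d=4}=[1,2,3,5;3,4,5,6]\hookrightarrow\widehat{H}^{\equiv0}_{d=5}$: $p=\#\varrho=2$, the diagonal term is $(\dd a_3/\dd x^5)^2\cdot H^{\equiv0}_{d=4}=0$, and the finitely many cross-terms (there are only a handful, since $H^{\equiv0}$ has no sinks and just two Levi--Civita vertices sharing the Casimir vertices $3,4$ and $5,6$) reduce to Jacobian-determinant identities already implied by the $d=4$ relation reported in \cite[Lemma~16]{FS}; I would verify these by hand or by the same graph-level Leibniz bookkeeping. For the twelve vanishing $1$-vector descendants of $\Gamma',\Gamma''$ over $\BBR^3$, one has $p=3$ and the same dichotomy diagonal-term-plus-cross-terms; since all twelve are explicitly listed in \cite[Lemma~2]{MJB}, the cross-terms are a finite, mechanically checkable list, and each cross-term is a $1$-vector micro-graph over $\BBR^3$ with Casimirs $a_1$ and a repeated $a_2$, whose vanishing follows from the already-established $d=3$ relations. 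The main obstacle I anticipate is precisely the cross-terms: unlike the clean diagonal term, they are genuinely new micro-graph formulas over $\BBR^d$ (with a coincidence among Casimirs), and making rigorous the claim that ``a vanishing relation over $\BBR^d$ persists after adjoining and identifying one extra Casimir'' requires the observation — which I would state as a small lemma — that a micro-graph identity over $\BBR^d$ is equivalent to a $GL(d)$-invariant differential-polynomial identity in the jets of $\varrho,a_1,\ldots,a_{d-2}$, and such identities are stable under any substitution of the arguments (in particular under setting two of them equal and/or adjoining a new one). Once that lemma is in place, both bullets follow by finite inspection.
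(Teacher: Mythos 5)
Your diagonal term is fine and coincides with the remark made in the paper immediately after Definition~\ref{DefEmbed}: giving every new edge the index $d+1$ reproduces $(\dd a_{d-1}/\dd x^{d+1})^p\cdot\phi(\Gamma)=0$. The genuine gap is in your treatment of the cross-terms, at two places. First, the reduction step ``choose the value $\alpha$ that is not used by the first Nambu block's $d+1$ indices'' is vacuous: over $\BBR^{d+1}$ the Levi--Civita symbol $\veps^{i_1\ldots i_{d+1}}$ carries $d+1$ pairwise distinct indices, so every value in $\{1,\ldots,d+1\}$ is used by every block and no such $\alpha$ exists; hence the claimed identification of ``each such block of terms'' with $(\dd a_{d-1}/\dd x^{\alpha})^p$ times the $d$-dimensional evaluation of $\Gamma$ covers only the genuinely diagonal part of the sum. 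Second, the ``small lemma'' invoked to dispose of the remaining cross-terms --- that a vanishing differential-polynomial identity over $\BBR^d$ ``stays true when one further argument is adjoined and identified'' --- does not apply to them: a cross-term contains derivatives $\dd/\dd x^{d+1}$ acting on $\varrho$, on the old Casimirs and on the sink arguments, together with derivatives $\dd a_{d-1}/\dd x^{i}$, $i\leqslant d$, of the one new Casimir shared by all blocks. Such an expression is not a substitution instance of the $d$-dimensional identity: that identity constrains only the jet variables along the first $d$ directions and says nothing about the new jet coordinates, and adjoining a Casimir changes the rank of the Levi--Civita symbol, i.e.\ the determinant identity itself. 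The statement you would need at this point is essentially Open problem~\ref{OpenPrbEmbedLinRel}, which the paper deliberately leaves open; the Proposition is a finite, dimension-specific fact, obtained by expanding $\smash{\widehat{H}}{}^{\equiv0}_{d=5}$ and the twelve embedded $1$-vector micro-graphs and checking that all cross-terms cancel, not by a general embedding-stability principle.

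Your fallback --- listing the finitely many cross-terms and verifying them mechanically --- is indeed what actually proves both bullets and is how the statement is established; but then the proof is that computation itself, and the general mechanism you present as the main argument does not close. Had your lemma been correct as stated, it would already settle the preservation of arbitrary linear relations under $\Gamma\hookrightarrow\smash{\widehat{\Gamma}}$, which the authors pose as an unresolved question.
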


\begin{define}[{cf.\ Definition~4 in~\S6 from~\cite{FS}}]\label{DefSynonyms}
Two topologically nonisomorphic graphs $\Gamma_1 \neq \Gamma_2$ are called \textsl{synonyms} if $\phi(\Gamma_1) = c \cdot \phi(\Gamma_2)$ with $c\in\BBR\setminus\{0\}$, that is, the two graphs provide the same multivector up to a nonzero constant.
\end{define}

\begin{ex}\label{ExSynonyms}
Over $d=3$, consider the seven nonisomorphic 0-vector Nambu micro-graphs (i.e.\  Hamiltonians) built of two Nambu (sub)\/graphs. A pair and a triple of synonyms are displayed in~\cite[Eq.~(4) and Lemma~11]{FS}; the remaining four formulas obtained from those seven graphs are linearly independent.\\[0.5pt]
\mbox{ }$\bullet$\quad  Likewise, over $d=4$, the 21 non-isomorphic Hamiltonians on two Nambu sub-graphs contain 8 pairs of synonyms (and one vanishing micro-graph): 
see~ \cite[Eq.~(5) and Lemma~16]{FS}.
\end{ex}

\begin{state}\label{PropEmbedPreservesSynonyms}
For the seven and four synonyms of $1$-vector graphs $\Gamma'$ and~$\Gamma''$ in the `sunflower' $\cX^{\gamma_3}_{d=2}=\Gamma'+2\cdot\Gamma''$, the embedding of every linear relation $\Gamma'_\alpha = \Gamma'_\beta$ or $\Gamma''_r = \Gamma''_s$ (for their formulas in dimension two) into higher dimensions $d=3$ and $d=4$ remains a valid linear relation between the formulas of larger micro\/-\/graphs:
$\smash{\widehat{\Gamma'_\alpha}} = \smash{\widehat{\Gamma'_\beta}}$ and $\smash{\widehat{\Gamma''_r}} = \smash{\widehat{\Gamma''_s}}$.
\end{state}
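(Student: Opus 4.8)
The plan is to derive each of the $(d{+}1)$-\/dimensional relations $\smash{\widehat{\Gamma'_\alpha}}=\smash{\widehat{\Gamma'_\beta}}$ and $\smash{\widehat{\Gamma''_r}}=\smash{\widehat{\Gamma''_s}}$ from the corresponding two\/-\/dimensional relation by two successive reductions, and then to iterate once more to reach $d=4$ (so $d=2\mapsto 3\mapsto 4$). The first reduction exploits that, by Definition~\ref{DefEmbed}, the freshly inserted Casimir $a_{d-1}$ enters a micro\/-\/graph formula only through its first derivatives, so that it suffices to verify the embedded relation when $a_{d-1}$ is affine\/-\/linear. The second exploits that each Nambu micro\/-\/graph evaluates to a multivector which is covariant under affine reparametrisations of $\BBR^{d+1}$, so that for a non\/-\/constant affine $a_{d-1}$ one may pass to the chart in which $a_{d-1}=x^{d+1}$; in that chart the embedded formula degenerates back to the original $d$-\/dimensional one, now evaluated on functions of one extra, inert variable, so that the hypothesis applies verbatim. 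The argument is particularly clean here because $\cX^{\gamma_3}_{d=2}=\Gamma'+2\cdot\Gamma''$ is a $1$-\/vector: there is a single sink, and hence no edge\/-\/ordering signs can intervene under the embedding.

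\emph{Reduction to affine\/-\/linear $a_{d-1}$.} By Definition~\ref{DefEmbed}, $\smash{\widehat{\Gamma}}$ is $\Gamma$ with one pendant edge adjoined at each Levi\/-\/Civita vertex, running to a new terminal vertex (containing $a_{d-1}$) of in\/-\/degree~$1$, and with no Leibniz rules reworked. Consequently $a_{d-1}$ occurs in $\phi(\smash{\widehat{\Gamma}})$ exactly $p\mathrel{{:}{=}}\#\varrho$ times, each time as a factor $\dd a_{d-1}/\dd x^{j_v}$ with $j_v$ the summation index on the $v$-\/th new edge; hence $\phi(\smash{\widehat{\Gamma}})$ is homogeneous of degree $p$ in the $d{+}1$ quantities $\dd a_{d-1}/\dd x^{1},\ldots,\dd a_{d-1}/\dd x^{d+1}$, with coefficients that are multivector\/-\/valued differential polynomials in $\varrho$ and the old Casimirs and do not involve $a_{d-1}$ (this is the cross\/-\/term expansion recorded just after Definition~\ref{DefEmbed}). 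Evaluating at a point, an identity $\phi(\smash{\widehat{\Gamma'_\alpha}})=\phi(\smash{\widehat{\Gamma'_\beta}})$ valid for all smooth $a_{d-1}$ is therefore equivalent to one valid for all affine\/-\/linear $a_{d-1}$; and when $\Id a_{d-1}\equiv 0$ that identity is trivial, every term carrying a factor $\Id a_{d-1}$.

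\emph{Collapse to dimension $d$.} For a non\/-\/constant affine $a_{d-1}$ there is an affine change of coordinates on $\BBR^{d+1}$ after which $a_{d-1}=x^{d+1}$; since the evaluation of a Nambu micro\/-\/graph commutes with affine reparametrisations --- the Levi\/-\/Civita symbol combined with $\varrho$ transforming as the components of a $d$-\/vector and the $a_i$ as scalars, as recalled just before Example~\ref{ExSynonymsOneVect} --- it is enough to verify $\phi(\smash{\widehat{\Gamma'_\alpha}})=\phi(\smash{\widehat{\Gamma'_\beta}})$ in this one chart. There $\dd a_{d-1}/\dd x^{j}=\delta^{j}_{d+1}$, so every new edge carries the index $d+1$; then at each Levi\/-\/Civita vertex $\veps^{i_1\ldots i_d\,(d+1)}=\veps^{i_1\ldots i_d}$ pins the remaining indices into $\{1,\ldots,d\}$ and the pendant factor equals $1$. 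What remains is precisely the $d$-\/dimensional Nambu micro\/-\/graph formula $\phi(\Gamma)$, contracted in the same pattern, with the single sink likewise receiving an index $\leqslant d$; i.e.\ $\phi(\smash{\widehat{\Gamma}})|_{a_{d-1}=x^{d+1}}=\phi(\Gamma)$, read as a differential polynomial that differentiates only in $x^1,\ldots,x^d$ but is now fed functions of $x^1,\ldots,x^{d+1}$. The hypothesis $\phi(\Gamma'_\alpha)=\phi(\Gamma'_\beta)$ over $\BBR^2$ is an identity for all smooth $\varrho$; as it contains no $\dd/\dd x^{d+1}$, it remains an identity when the arguments are allowed to depend on the inert variable $x^{d+1}$ (fix $x^{d+1}$ and apply it slice by slice). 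Hence $\phi(\smash{\widehat{\Gamma'_\alpha}})=\phi(\smash{\widehat{\Gamma'_\beta}})$ in the chosen chart, and --- by the two reductions above --- in every chart. Applying the same reasoning to the already\/-\/embedded relation over $\BBR^3$ gives the assertion over $\BBR^4$, and identically for the $\Gamma''$-\/synonyms. (The same reduction, applied to a one\/-\/term relation $\phi(\Gamma)=0$, reproves Proposition~\ref{PropEmbed}.)

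The index bookkeeping in the collapse step and the enumeration of the seven synonym pairs of $\Gamma'$ and of the four of $\Gamma''$ are routine; only the \emph{existence} of the two\/-\/dimensional relations is used, and those are on record in~\cite{FS}. I expect the only genuinely delicate point to be the affine\/-\/covariance of the micro\/-\/graph evaluation --- that $\varrho\cdot\veps^{\vec{\imath}}$ behaves as a $d$-\/vector under $GL(d{+}1)$, which is what legitimises the passage to the chart $a_{d-1}=x^{d+1}$ --- together with the requirement, for higher\/-\/arity analogues, that the embedding append the new edges at the end of the global edge ordering so that no signs creep in; for the $1$-\/vector synonyms at hand the latter is vacuous, which is (presumably) why the present statement is proved while the analogous assertion for arbitrary linear combinations is only conjectured in the Introduction. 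A caveat: the argument is phrased for the evaluated multivectors, where all copies of $\varrho$ and all copies of each Casimir are identified, so that in the two reductions there is a single $a_{d-1}$ and a single covector $\Id a_{d-1}$ to normalise; keeping the per\/-\/copy Casimirs formally distinct would only require expanding multilinearly in the several covectors $\Id a_{d-1}$, which is unnecessary for Definition~\ref{DefSynonyms}. Independently, a direct symbolic evaluation of $\phi(\smash{\widehat{\Gamma'_\alpha}})-\phi(\smash{\widehat{\Gamma'_\beta}})$ and $\phi(\smash{\widehat{\Gamma''_r}})-\phi(\smash{\widehat{\Gamma''_s}})$ over $d=3$ and $d=4$ furnishes a check.
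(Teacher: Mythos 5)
Your proof is correct, but it is a genuinely different route from the paper's: the paper gives no conceptual argument for Proposition~\ref{PropEmbedPreservesSynonyms} at all --- the statement is recorded as a fact verified by direct symbolic evaluation of the embedded micro-graph formulas in dimensions three and four (cf.~the companion papers \cite{MJB,FS} and the software-assisted checks), and the mechanism behind such preservation is deliberately left as Open problem~\ref{OpenPrbEmbedLinRel}. Your two-step reduction is sound within the paper's framework: (i) by Definition~\ref{DefEmbed} each of the $p=\#\varrho$ copies of the fresh Casimir $a_{d-1}$ is differentiated exactly once, so pointwise only $\nabla a_{d-1}$ enters and it suffices to treat affine $a_{d-1}$, the constant case being trivial by homogeneity; (ii) affine equivariance of the micro-graph evaluation --- which is exactly the paper's premise that these formulas are multivector-valued $GL(d)$-invariants, with $\varrho\cdot\varepsilon^{i_1\ldots i_{d+1}}$ the components of the top $(d+1)$-vector (not a ``$d$-vector under $GL(d+1)$'', a small slip in your wording) --- lets you normalise $a_{d-1}=x^{d+1}$, where the pinned index on every new edge collapses $\varepsilon^{i_1\ldots i_d\,(d+1)}$ to the $d$-dimensional symbol and reproduces the lower-dimensional formula with $x^{d+1}$ inert; the cross-terms noted after Definition~\ref{DefEmbed} are precisely what the chart change rotates away. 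What your approach buys is generality and insight: it uses only the existence of the two-dimensional relations, re-proves Proposition~\ref{PropEmbed} as the one-term case, and, since the new arrow always occupies the last slot of the wedge ordering at each Levi-Civita vertex irrespective of arity (so your worry about signs, and your guess that this is why the general case is only conjectured, are both beside the point), it would in fact settle Open problem~\ref{OpenPrbEmbedLinRel} affirmatively for all identically valid linear relations --- a stronger conclusion than the proposition, which you should state and check with corresponding care. What the paper's computational verification buys, conversely, is an explicit, assumption-free confirmation of the specific instances without relying on the equivariance bookkeeping.
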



\begin{open}\label{OpenPrbEmbedLinRel}
Is it true that the embedding of 
Nambu micro\/-\/graphs always preserves linear relations between their respective formulas\,?
\end{open}

\section{If Kontsevich's flows over 2D are coboundaries, then which ones\,?}\label{SecFlows2D}

Over dimension $d=2$, every bi-vector $P = \varrho(x,y)\, \dd_x\wedge\dd_y$ is Poisson (in absence of nonzero tri-vectors $\tfrac{1}{2}\lshad P,P\rshad$ for the left-hand side of the Jacobi identity). For the same reason, every bi-vector is a Poisson 2-cocycle. Yet the graph cocycle flows at hand are not obliged to be coboundaries because the Lichnerowicz-Poisson second cohomology does not vanish \textit{a priori} over $d=2$. Indeed, the structure $P$ can degenerate on a locus inside ${M}_{\text{aff}}^2$, so that nontrivial Poisson cocycles start to exist.\footnote{For example, take $\varrho(x,y):=x^py^q\cdot\varrho(x,y)$, where $p,q\gg1$ and $\varrho$ is smooth near the origin of $\mathbb{R}^2\ni(x,y)$. Then every coboundary $\lshad P,\vec{X}(x,y)\rshad$ also vanishes at (0,0) for all smooth vector fields $\vec{X}$ on $\mathbb{R}^2$, still there exist many bi-vectors $Q\in\mathfrak{X}^2(\mathbb{R}^2)$, hence $Q\in\ker\lshad P,\cdot\rshad$, which do not vanish at (0,0), so these $Q\not\in\text{im}\lshad P,\cdot\rshad$ mark nontrivial Poisson 2-cocycles.}

We recall from Claim~\ref{ThNoTrivViaGraphs} that no universal --\,at the level of Kontsevich graphs\,-- trivialisation can be possible over $d\geqslant 2$ for the $\gamma_{2\ell+1}$-wheel graph flows $\dot{P}=\Or(\gamma_{2\ell+1})(P^{\otimes^{2\ell+2}})$. It is now all the more amazing that not only are these $\gamma_{2\ell+1}$-graph cocycle flows coboundaries over $d=2$, i.e.\ $Q_{d=2}^{\gamma_{2\ell+1}}=\lshad P(\varrho),\vec{X}_{d=2}^{\gamma_{2\ell+1}}\rshad$, but also there do exist particular solutions $X_{d=2}^{\gamma_i}$ that conjecturally provide linear combinations of (Kontsevich) micro-graphs over which solutions $\vec{X}_{d\geqslant 2}^{\gamma_i}$ appear in higher dimensions (e.g., for $\gamma_3$ and $d=4$, see~\cite{MJB}).

\begin{prop}[{\cite{Ascona96,Anass2017}}]\label{PropG3triv2D}
For the tetrahedron~$\gamma_3$ on $n=4$ vertices, the trivialising vector field $\vec{X}^{\gamma_3}_{d=2}(P\otimes P\otimes P)$ is unique modulo Hamiltonian vector fields with~$H(P\otimes P)$ given by Kontsevich graphs. The formula of a particular representative $\vec{X}^{\gamma_3}_{d=2} \mod\vec{X}_{H(P\otimes P)}$ is encoded by the `sunflower' graph (see~\cite[App.\,F]{f16} and~\cite{skew23}),
\begin{equation}\label{EqSunflower}
X^{\gamma_3}_{d=2} = (0,1;1,3;1,2)+2\cdot(0,2;1,3;1,2) = \Gamma'+2\Gamma'' =
\raisebox{0pt}[6mm][4mm]{\unitlength=0.4mm
\special{em:linewidth 0.4pt}
\linethickness{0.4pt}
\begin{picture}(17,24)(5,5)
\put(-5,-7){
\begin{picture}(17.00,24.00)
\put(10.00,10.00){\circle*{1}}
\put(17.00,17.0){\circle*{1}}
\put(3.00,17.0){\circle*{1}}
\put(10.00,10.00){\vector(0,-1){7.30}}
\put(17.00,17.00){\vector(-1,0){14.00}}
\put(3.00,17.00){\vector(1,-1){6.67}}
\bezier{30}(3,17)(6.67,13.67)(9.67,10.33)
%
%
\put(17,17){\vector(-1,-1){6.67}}
\bezier{30}(17,17)(13.67,13.67)(10.33,10.33)
\bezier{52}(17.00,17.00)(16.33,23.33)(10.00,24.00)
\bezier{52}(10.00,24.00)(3.67,23.33)(3.00,17.00)
\put(16.8,18.2){\vector(0,-1){1}}
\put(10,17){\oval(18,18)}
\put(10,10){\line(1,0){10}}
\bezier{52}(20,10)(27,10)(21,16)
\put(21,16){\vector(-1,1){0}}
\end{picture}
}\end{picture}}
\:.  
\end{equation}
$\bullet$\quad The formula $\vec{X}_{d=2}^{\gamma_3}\in\mathfrak{X}^1(\mathbb{R}^2)$ of this `sunflower' vector field is Hamiltonian (in the classical sense) with respect to the standard symplectic structure $\omega_2=\Id x\wedge\Id y$ on~$\mathbb{R}^2$: one can readily inspect that 
\[
\vec{X}^{\gamma_3}_{d=2}= \bigl(\Id x\wedge \Id y\bigr)^{-1}
 \bigl( \Id_{\text{deRham}} \text{Ham}^{\gamma_3}(P\otimes P\otimes\varrho)  \bigr),
\]
where the formula of the Hamiltonian $\text{Ham}^{\gamma_3}$ is encoded again by a graph, namely (1,3;1,2): it is built over three vertices 1,2,3 from two wedges (with tops 2 and 3); the vertex 1 is terminal, it contains $\varrho$.\\[0.5pt]
$\bullet$\quad Under the mapping of $\Id(\text{Ham}^{\gamma_3})$ by $(\Id x\wedge \Id y)^{-1}$ to the vector field $\vec{X}_{d=2}^{\gamma_3}$, two edges are issued from that terminal vertex: one edge goes to the sink 0 (that is, to the argument of the 1-vector), while the other edge works by the Leibniz rule over all vertices of the Hamiltonian graph Ham$^{\gamma_3}$, whence at least one tadpole arises (specifically, in the graph $\Gamma'$ of the `sunflower').
\end{prop}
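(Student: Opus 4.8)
Throughout, fix $d=2$ and write $P=\varrho(x,y)\,\dd_x\wedge\dd_y$; recall that here $\lshad P,P\rshad\equiv 0$, so every bi\/-\/vector is a Poisson $2$-cocycle and the only content of the problem $Q^{\gamma_3}_{d=2}=\lshad P,\vec{X}\rshad$ lies in the existence and shape of~$\vec{X}$. The plan is to reduce this to a finite linear\/-\/algebra problem over differential polynomials in the single density~$\varrho$. First I would expand the oriented tetrahedron $\Or(\gamma_3)$ into its three directed Kontsevich graphs and evaluate in $d=2$, obtaining $Q^{\gamma_3}_{d=2}=f[\varrho]\cdot\dd_x\wedge\dd_y$ for an explicit differential polynomial $f$ of weight~$4$ in~$\varrho$. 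Next, enumerate the finitely many admissible nonisomorphic $1$-vector Kontsevich graphs $\Gamma_i$ on $n=3$ wedges over one sink (by Example~\ref{ExSynonymsOneVect} the $14$ such graphs collapse to a $3$-dimensional space of vector\/-\/field formulas over~$\mathbb{R}^2$), set $\vec{X}=\sum_i c_i\Gamma_i$, compute $\lshad P,\vec{X}\rshad$ by the graph Leibniz rule (a linear combination of bi\/-\/vector Kontsevich graphs on $4$ wedges), evaluate it in $d=2$ as $\bigl(\sum_i c_i\,g_i[\varrho]\bigr)\dd_x\wedge\dd_y$, and impose $\sum_i c_i\,g_i[\varrho]=f[\varrho]$. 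This is a linear system for the~$c_i$ inside the finite\/-\/dimensional space of weight\/-\/$4$ differential\/-\/polynomial densities on~$\mathbb{R}^2$.

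The second step is to solve that system and read off the structure of its solution set. Its consistency --- the \emph{existence} of a trivialiser assembled from $3$-wedge Kontsevich graphs --- is the computational heart of the statement and is exactly the two\/-\/dimensional result of~\cite{Ascona96,Anass2017}; note that this is special to $d=2$ and is in no way implied by Claim~\ref{ThNoTrivViaGraphs}, which only denies a \emph{$d$-independent} graph trivialiser. The solution set is a coset of $\mathcal V_0:=\ker\lshad P,{\cdot}\rshad\cap\operatorname{span}\{\Gamma_i\}$. One inclusion, $\mathcal V_0\supseteq\bigl\{\,\lshad P,H\rshad : H \text{ a } 0\text{-vector Kontsevich graph on } 2 \text{ wedges}\,\bigr\}$, is immediate, since $\lshad P,\lshad P,H\rshad\rshad=\tfrac12\lshad\lshad P,P\rshad,H\rshad=0$ in $d=2$; for the reverse inclusion I would compare dimensions --- the space of Hamiltonian graphs $H(P\otimes P)$ on two wedges is itself small (again by a synonym count, as in~\cite{FS}), and one checks by direct evaluation over~$\mathbb{R}^2$ that $H\mapsto\lshad P,H\rshad$ already exhausts~$\mathcal V_0$. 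This gives uniqueness of the trivialiser modulo Hamiltonian vector fields with graph\/-\/encoded $H(P\otimes P)$, and substituting the `sunflower' combination $\Gamma'+2\Gamma''$ of~\eqref{EqSunflower} into the matched system verifies that it is one such representative.

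For the first bullet, use that on~$\mathbb{R}^2$ the inverse symplectic form $(\Id x\wedge\Id y)^{-1}$ \emph{is} the constant Poisson bi\/-\/vector $\dd_x\wedge\dd_y$, so the operator $(\Id x\wedge\Id y)^{-1}\circ\Id_{\text{dR}}$ equals $\lshad\dd_x\wedge\dd_y,{\cdot}\rshad$; expanding the Schouten bracket of this constant wedge against the Hamiltonian graph $\text{Ham}^{\gamma_3}=(1,3;1,2)$ by the Leibniz rule produces one new wedge, one of whose legs becomes the free argument of the $1$-vector (the sink~$0$) while the other differentiates, in turn, each of the three $\varrho$-carrying vertices of~$\text{Ham}^{\gamma_3}$ --- the terminal $\varrho$-vertex contributing $\Gamma'$, and each of the two $P$-wedges contributing a copy of~$\Gamma''$, for the total $\Gamma'+2\Gamma''$; comparing with the coordinate form of~$\text{Ham}^{\gamma_3}$ (a cubic differential polynomial in~$\varrho$) confirms that $\vec{X}^{\gamma_3}_{d=2}=(\Id x\wedge\Id y)^{-1}\bigl(\Id_{\text{dR}}\,\text{Ham}^{\gamma_3}(P\otimes P\otimes\varrho)\bigr)$. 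The second bullet then reads off from the very same expansion: in the summand where the Leibniz\/-\/acting leg of the new wedge differentiates the copy of~$\varrho$ already sitting at the terminal vertex, that leg closes into a $1$-loop --- this is the tadpole of~$\Gamma'$ --- whereas in the $\Gamma''$-summands no loop appears. I expect the main obstacle of the whole proof to be not this bookkeeping but the sharp identification of~$\mathcal V_0$, i.e.\ ruling out `exotic' graph\/-\/encoded Poisson vector fields on three wedges beyond the graph\/-\/Hamiltonian ones, which rests on the finite rank/weight count indicated above.
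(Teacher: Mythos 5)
Your proposal is correct and amounts to exactly the verification the paper has in mind: Proposition~\ref{PropG3triv2D} is not proved in the text (it is cited to \cite{Ascona96,Anass2017}, with the two bullets left to direct inspection), and your reduction to a finite linear system over the weight\/-\/four differential polynomials in~$\varrho$ --- existence from the $14\to3$ collapse of $1$-vector graph formulas of Example~\ref{ExSynonymsOneVect}, uniqueness by identifying the kernel of $X\mapsto\lshad P,X\rshad$ with graph\/-\/Hamiltonian vector fields $\lshad P,H(P\otimes P)\rshad$, and the correct remark that Claim~\ref{ThNoTrivViaGraphs} is not contradicted --- is precisely the computation underlying the cited references and \cite{f16,skew23,FS}. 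Your Leibniz expansion of $\omega_2^{-1}\bigl(\Id_{\text{dR}}\text{Ham}^{\gamma_3}\bigr)$ also matches the paper's description of both bullets: the constant $\varepsilon^{ij}$ combines with the $\varrho$ in the terminal vertex, whose new wedge sends one leg to the sink while the other runs over the three vertices, giving $\Gamma'$ (with the tadpole, from the term hitting the terminal vertex itself) plus two copies of $\Gamma''$, in agreement with~\eqref{EqSunflower}.
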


\begin{rem}[{\cite{skew23}}]
In any solution $\smash{\vec{X}^{\gamma_3}_{d=2}}$, at least one tadpole is necessary.
\end{rem}

For the graph cocycles $\gamma\in\bigl\{ \gamma_5$,\ $\gamma_7$,\ $[\gamma_3,\gamma_5] \bigr\}$ beyond $\gamma_3$, the deformations $\dot{P}=\Or(\gamma)([P])$ of bi-vectors $P$ over $\mathbb{R}^2$ are trivialised by vector fields~$\vec{X}^\gamma_{d=2}([P])$, also encoded by Formality graphs; 
in each case, there is a particular solution $\vec{X}_{d=2}^{\gamma}=\omega_2^{-1}\bigl( \Id_{\text{dR}}(\text{Ham}^{\gamma})\bigr)$ with the Hamiltonian encoded by graphs.

\begin{state}\label{PropG5triv2D} 
For the pentagon-wheel graph cocycle $\gamma_5$ (see~\cite{JNMP2017,sqs17}), the respective flow $\dot{P}=\Or(\gamma_5)(P_{d=2}^{\otimes^6})\mathrel{{=}{:}}Q_{d=2}^{\gamma_5}$ on the space of bi-vectors~$P$ over $\mathbb{R}^2$ is a coboundary, $Q_{d=2}^{\gamma_5}=\lshad P,\vec{X}_{d=2}^{\gamma_5}\rshad$, with respect to the vector field $\vec{X}_{d=2}^{\gamma_5}=\omega_2^{-1}(\Id_\text{dR}(\text{Ham}^{\gamma_5}))$ built of five wedges.\\[0.5pt]
$\bullet$\quad  The Hamiltonian Ham$^{\gamma_5}$ is built of four wedges --\,their tops in the vertices 2,3,4,5\,-- and one terminal vertex~1 (containing $\varrho(x,y)$) in each of its three graphs; its encoding is 
\[
\text{Ham}^{\gamma_5}= 6\cdot[3,5; 4,5; 1,2; 1,2]
 - 2\cdot[3,5; 4,5; 1,2; 1,4]
 - 2\cdot[1,5; 1,4; 1,2; 1,3].
\] 
$\bullet$\quad  The trivialising vector field $\smash{\vec{X}_{d=2}^{\gamma_5}}=\phi(X_{d=2}^{\gamma_5})$ is encoded by $15=3\times5$ graphs built of five wedges over one sink~0 in each term: the encodings for each of the three $5$-\/tuples of $1$-\/vector graphs are obtained by issuing the wedge from vertex~1, namely by sending its Left arrow to the sink~0 and by letting the Right edge (1,$i$) run over all the aerial vertices $i\in\{1,2,3,4,5\}$ (so that the edge (1,1) is the tadpole).\\[0.5pt]
$\bullet$\quad  This solution $\vec{X}_{d=2}^{\gamma_5}$ of the $\gamma_5$-flow trivialisation problem $Q_{d=2}^{\gamma_5}=\lshad P,\smash{\vec{X}_{d=2}^{\gamma_5}}\rshad$ over $d=2$ is unique modulo Poisson vector fields $\vec{X}_H=\lshad P,H\rshad$ with Hamiltonians $H(P^{\otimes^4})$ encoded by Kontsevich graphs on four wedges.
\end{state}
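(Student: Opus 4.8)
The plan is to verify all four assertions by an explicit, computer\/-\/assisted comparison of the two sides in Cartesian coordinates on $\BBR^2$, following exactly the scheme that works for the tetrahedron in Proposition~\ref{PropG3triv2D} (and, with no change, for $\gamma_7$ and $[\gamma_3,\gamma_5]$). First I would produce the left\/-\/hand side: starting from the two\/-\/graph representative of $\gamma_5$ on $6$ vertices and $10$ edges, apply the edge\/-\/orientation morphism to get $Q^{\gamma_5}=\Or(\gamma_5)(P^{\otimes^6})$ as the sum of the $91$ bi\/-\/vectors / $167$ Kontsevich digraphs of Table~\ref{TabNumberGraphsInCocycles}, and then specialise $P=\varrho(x,y)\,\dd_x\wedge\dd_y$. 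Since every bi\/-\/vector on $\BBR^2$ is proportional to $\dd_x\wedge\dd_y$, this collapses to $Q^{\gamma_5}_{d=2}=W(\varrho)\cdot\dd_x\wedge\dd_y$ for a single differential polynomial $W$, homogeneous of degree~$6$ in $\varrho$; computing $W$ is a finite bookkeeping task whose only pitfalls are the sign conventions in $\Or$ and the skew\/-\/symmetrisation over the two sinks, which I would cross\/-\/check against the already verified output for $\gamma_3$.

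Next I would set up the right\/-\/hand side as an Ansatz. A $0$\/-\/vector (``Hamiltonian'') built of four wedges with tops $2,3,4,5$ and one terminal vertex $1$ carrying $\varrho$ spans a small, explicitly enumerable graph space; its image under $\text{Ham}\mapsto\omega_2^{-1}\bigl(\Id_{\text{dR}}\text{Ham}\bigr)$ is obtained graph\/-\/by\/-\/graph by issuing a new wedge from vertex~$1$, sending its Left arrow to the sink~$0$ and letting the Right edge $(1,i)$ run over all five aerial vertices $i\in\{1,\dots,5\}$ --- this yields $5\times(\#\,\text{graphs in }\text{Ham})$ Kontsevich $1$\/-\/vector graphs on five wedges and, after evaluation at $P=\varrho\,\dd_x\wedge\dd_y$, a vector field $\vec{X}_{d=2}$ on $\BBR^2$. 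I would then compute $\lshad P,\vec{X}_{d=2}\rshad$, again necessarily of the form $\widetilde{W}(\varrho)\cdot\dd_x\wedge\dd_y$, and solve the finite\/-\/dimensional linear system $\widetilde{W}(\varrho)=W(\varrho)$ for the coefficients of the graphs in $\text{Ham}^{\gamma_5}$; I expect it to be consistent and to single out $\text{Ham}^{\gamma_5}=6\cdot[3,5;4,5;1,2;1,2]-2\cdot[3,5;4,5;1,2;1,4]-2\cdot[1,5;1,4;1,2;1,3]$, after which reading off the $15=3\times5$ graphs of $\vec{X}^{\gamma_5}_{d=2}$ settles the first three parts of the statement.

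For the uniqueness part, the ambiguity in a trivialising vector field is exactly $\ker\lshad P,\cdot\rshad$ inside the space of $1$\/-\/vectors built of five wedges over one sink, taken on $\BBR^2$. I would describe this kernel completely by linear algebra: compute $\lshad P,\cdot\rshad$ on that space, compute the image of $\lshad P,\cdot\rshad$ on the space of Hamiltonians $H(P^{\otimes^4})$ built of four wedges, and check that the two subspaces coincide. It is cleanest to run all of this at the level of evaluated differential polynomials on $\BBR^2$ rather than of graphs, since in dimension two the graph\/-\/to\/-\/formula map is highly non\/-\/injective (cf.\ Example~\ref{ExSynonymsOneVect} and Proposition~\ref{PropExistNonzeroVanishingG}): all relations that hold identically on $\BBR^2$ must be divided out before the dimension count can close.

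The main obstacle is not conceptual but one of scale and certification: correctly enumerating and orienting the $91$ bi\/-\/vectors of $Q^{\gamma_5}$ with signs, controlling the $d=2$ degeneracies among graph formulas, and --- most delicately --- certifying that the computed kernel is exhausted by $\lshad P,H(P^{\otimes^4})\rshad$, which is what makes the uniqueness claim rigorous. The genuine content is that a graph\/-\/encodable solution exists at all over $d=2$; this stands in sharp contrast to Claim~\ref{ThNoTrivViaGraphs}, which rules out any such trivialisation uniformly over all $d\geqslant2$.
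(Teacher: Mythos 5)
Your plan is essentially the paper's own route: the proposition rests on exactly this computer\/-\/assisted verification (orient the two\/-\/graph cocycle $\gamma_5$ into the $91$ bi\/-\/vectors, restrict to $P=\varrho\,\dd_x\wedge\dd_y$ so that both sides become scalar differential polynomials in $\varrho$ times $\dd_x\wedge\dd_y$, solve the finite linear system for the coefficients of the four\/-\/wedge Hamiltonian graphs, and settle uniqueness by checking that the Poisson\/-\/vector\/-\/field ambiguity is exhausted by $\lshad P,H(P^{\otimes^4})\rshad$ with graph\/-\/encoded~$H$, the uniqueness analysis being the subject of~\cite{FS}), with the paper printing only the resulting encodings rather than the computation. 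No gap beyond carrying out the stated bookkeeping.
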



\begin{rem}\label{RemG7G35triv2D}
For the heptagon-wheel graph cocycle $\gamma_7$ from~\cite{JNMP2017}, a solution $\vec{X}_{d=2}^{\gamma_7}=\omega_2^{-1}(\Id_{\text{dR}}\text{Ham}^{\gamma_7})$ of the trivialisation problem $\Or(\gamma_7)(P^{\otimes^8}_{d=2})=\lshad P,\vec{X}_{d=2}^{\gamma_7}\rshad$ over $\mathbb{R}^2$ is known from~\cite[\S6.4]{BuringPhD}.

The graph commutator $[\gamma_3,\gamma_5]$, itself not a $(2\ell+1)$-wheel generator of $\mathfrak{grt}$, acts on bi-vectors $P$ over $\mathbb{R}^2$ in a similar way: 
$\Or\bigl( [\gamma_3,\gamma_5]\bigr)(P_{d=2}^{\otimes^9})=\lshad P,\vec{X}_{d=2}^{[\gamma_3,\gamma_5]}\rshad$ with $\vec{X}_{d=2}^{[\gamma_3,\gamma_5]}=\omega_2^{-1}(\Id_{\text{dR}}\text{Ham}^{[\gamma_3,\gamma_5]})$ built of eight wedges.\footnote{R.~Buring, private communication (14~May 2024).}

However, in both the cases (for $\gamma_7$ and for $[\gamma_3,\gamma_5]$), the respective Hamiltonians, referred to the standard symplectic structure $\omega_2$ on $\mathbb{R}^2$, were obtained at the level of homogeneous differential polynomials in $\varrho$, that is, not yet at the level of Formality graphs built only of wedges and one terminal vertex --- in contrast with Propositions~\ref{PropG3triv2D} and~\ref{PropG5triv2D} where we make that graph realisation explicit.
\end{rem}

\begin{open}\label{OpenPrbDeRhamHamX}
Is it true that in dimension $d=2$, for each graph cocycle~$\gamma$ from the Grothendieck\/--\/Teich\-m\"ul\-ler Lie algebra~$\mathfrak{grt}$ generated by the $(2\ell+1)$-\/wheel cocycles~$\gamma_{2\ell+1}$, the $\gamma$-\/flow trivialisation problem always has a solution of the shape
$\smash{\vec{X}^{\gamma}_{d=2}} = \omega_2^{-1} \bigl( \Id \text{Ham}^\gamma \bigr)$,
where, moreover, the directed graphs in the $0$-\/vector $\text{Ham}^\gamma$ are built of $2\ell$~wedges (for copies of~$P$) and one terminal vertex (with~$\varrho(x,y)$)\,?
\end{open}

\begin{rem}\label{RemSunflowerGivesGraphs3D4D}
The `sunflower' graph~\eqref{EqSunflower} is special: on its $(d=3,4)$-\/descendants, i.e.\ on the set of Kontsevich micro\/-\/graphs which appear from Kontsevich's two graphs in the `sunflower', there exist a solution in dimension three and a solution in dimension four (see~\cite{MJB}).
In the subsequent paper~\cite{FS}, by running over the synonyms of either graph in the `sunflower' solution of the trivialisation problem at~$d=2$, we detect that this effect is not generic: over $(d\geqslant3)$-descendants of the synonyms, solutions typically cease to exist.
\end{rem}



\section{Kontsevich graph flows of Nambu\/--\/Poisson brackets over~$\mathbb{R}^{\geqslant 3}$}\label{SecFlowsHighDim}

The Kontsevich graph cocycles $\gamma$ on $n$ vertices and $2n-2$ edges act on the space $\mathfrak{X}^2(\mathbb{R}^2)$ of bi-vectors over affine spaces $\mathbb{R}^d$ of any dimension $d\geqslant 2$; the graph flows $\dot{P}=\Or(\gamma)(P^{\otimes^n})$ preserve the subset of all \textsl{Poisson} bi-vectors~$P$ satisfying the Jacobi identity $\tfrac{1}{2}\lshad P,P\rshad=0$. Let us study whether in the set of \textsl{all} Poisson bi-vectors, Kontsevich's graph flows preserve the class $\{P(\varrho,[\boldsymbol{a}])\}$ of Nambu-determinant Poisson structures on $\mathbb{R}^d$.


\begin{define}\label{DefNambuClassIsPreserved}
The class of Nambu\/--\/Poisson brackets $P(\varrho,[\boldsymbol{a}])$ on $\mathbb{R}^d$, $d\geqslant 3$, is preserved by a flow $\tfrac{d}{d\varepsilon}(P)=Q([P])$
if there exist, for all $\varrho\cdot\partial_{\boldsymbol{x}}\in\mathfrak{X}^d(\mathbb{R}^d)$ and Casimirs $a_i\in C^{\infty}(\mathbb{R}^d)$ simultaneously, the evolution equations $\tfrac{d}{d\varepsilon}(\varrho)=R([\varrho],[\boldsymbol{a}])$ and $\tfrac{d}{d\varepsilon}(a_i)=A_i([\varrho],[\boldsymbol{a}])$ such that the evolution of Nambu bi-vector~$P\bigl(\varrho,[\ba]\bigr)$ along $Q([P])$ amounts to the Leibniz rule for $d/d\varepsilon$ acting on its components:
\begin{equation}\label{EqEvolAviaG}
\tfrac{d}{d\varepsilon}\Bigl(P\bigl([\varrho],[\ba]\bigr)\Bigr) = Q([P]) = P\bigl(\tfrac{d}{d \varepsilon}{\varrho},[\ba]\bigr) +
\sum\nolimits_{i=1}^{d-2} P\bigl(\varrho,[a_1],\ldots,[\tfrac{d}{d \varepsilon} a_i],\ldots,[a_{d-2}]\bigr),
\end{equation}
that is, the evolutions of~$P(\varrho,[\ba])$ and of its elements, $\varrho$ and Casimirs~$a_i$,
match.
\end{define}

\begin{ex}[\cite{skew21}]\label{ExGFlowRestrictsToNambu}
    The $\gamma_3$-deformation restricts to the class of Nambu-determinant Poisson bi-vectors on (at least) $\mathbb{R}^3$ and~$\mathbb{R}^4$. The same is true also for the graph cocycle $\gamma_5$ and its action on the Nambu\/--\/Poisson class of brackets~\eqref{EqNambuCoord} over~$\mathbb{R}^3$.
\end{ex}

\begin{conjecture}[see \cite{skew21}]\label{ConjEvolARho}
Consider the Kontsevich $\gamma$-cocycle deformation $\dot{P}=\Or(\gamma)(P^{\otimes^n})$, where $n$~is the number of vertices in each term of $\gamma$ and $2n-2$ is the number of edges, and assume that this flow $\dot{P}=Q^{\gamma}([P])$ does restrict to the flow $Q_d^{\gamma}$ on the class of Nambu\/--\/Poisson bi\/-\/vectors $P\bigl(\varrho,[\ba]\bigr)$ over~$\BBR^d$ for some $d\geqslant 3$. By definition, put (with reference of tuples of arguments to vertices of each term in the graph cocycle~$\gamma$):
\[
\tfrac{d}{d \varepsilon} a_i = \Or(\gamma)(a_i\otimes P^{\otimes^{n-1}}) +
 \Or (\gamma)(P\otimes a_i\otimes P^{\otimes^{n-2}}) +
  \ldots +
 \Or (\gamma)(P^{\otimes^{n-1}}\otimes a_i).
\]
Then, the conjecture is that the fraction,
\[
\tfrac{d}{d \varepsilon} \varrho = \frac{ \Bigl( Q_d^{\gamma}([P]) 
  - \sum_{i=1}^{d-2} P\bigl(\varrho,[a_1],\ldots,[\tfrac{d}{d \varepsilon} {a_i}],\ldots,[a_{d-2}]\bigr)\Bigr) (f,g) }
{ \det\Bigl( \dd \bigl( f,g,a_1,\ldots,a_{d-2}\bigr) \big/ \dd\bigl(x^1,\ldots,x^d\bigr) \Bigr) },
\]
is differential polynomial in~$\varrho$ and~$a_i$, 
so that Leibniz rule~\eqref{EqEvolAviaG} tautologically holds.
\end{conjecture}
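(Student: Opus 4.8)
\medskip
\noindent\textbf{Towards a proof of Conjecture~\ref{ConjEvolARho}.}\quad
Because the coefficient~$\varrho$ enters formula~\eqref{EqNambuCoord} linearly, the conjecture is equivalent to the assertion that the skew bi\/-\/derivation
\[
\mathcal{N}(f,g) \mathrel{{:}{=}} Q_d^{\gamma}([P])(f,g) - \sum\nolimits_{i=1}^{d-2} P\bigl(\varrho,[a_1],\ldots,[\tfrac{d}{d\varepsilon}a_i],\ldots,[a_{d-2}]\bigr)(f,g),
\]
whose coefficients are manifestly differential polynomials in~$\varrho$ and the~$a_i$, is divisible by the Jacobian $\det\bigl(\dd(f,g,a_1,\ldots,a_{d-2})/\dd\bx\bigr)$ with a differential\/-\/polynomial quotient; that quotient is then~$\tfrac{d}{d\varepsilon}\varrho$, and Leibniz rule~\eqref{EqEvolAviaG} holds by construction. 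The plan is to split this into two tasks, using the elementary fact that on the open dense locus $\{\dd a_1\wedge\cdots\wedge\dd a_{d-2}\neq 0\}$ a skew bi\/-\/vector~$B$ on~$\BBR^d$ is a function multiple of the Nambu bi\/-\/vector $P(1,[\ba])$ (i.e.\ of~\eqref{EqNambuCoord} with $\varrho\equiv 1$) \emph{if and only if} $B(a_j,{\cdot})\equiv 0$ for every Casimir~$a_j$: indeed the $d-2$ conditions $\iota_{\dd a_j}B=0$ cut the space of bi\/-\/vectors down, pointwise, to the line spanned by $P(1,[\ba])$. So I would first prove that $\mathcal{N}(a_j,{\cdot})\equiv 0$ for all $j=1,\ldots,d-2$ --- which already makes $\tfrac{d}{d\varepsilon}\varrho=\mathcal{N}(f,g)/\det(\dd(f,g,\ba)/\dd\bx)$ a well\/-\/defined smooth function there --- and then prove that this quotient is in fact a differential polynomial everywhere.

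For the vanishing $\mathcal{N}(a_j,{\cdot})\equiv 0$ I would use the Kontsevich graph calculus. A one\/-\/line computation from Definition~\ref{DefNambuPoissonCoord} shows that, for any function~$b$ inserted into the $i$\/th Casimir slot, $P\bigl(\varrho,[a_1],\ldots,[b],\ldots,[a_{d-2}]\bigr)(a_j,g) = -\delta_{ij}\,\{b,g\}_d$ (for $i\neq j$ the row~$\dd a_j$ is repeated, and for $i=j$ a single transposition carries~$\dd b$ into the first row of the Jacobian). Hence $\sum_i P(\varrho,\ldots,[\tfrac{d}{d\varepsilon}a_i],\ldots)(a_j,{\cdot}) = -\{\tfrac{d}{d\varepsilon}a_j,{\cdot}\}_d = -\lshad P,\tfrac{d}{d\varepsilon}a_j\rshad$, and $\mathcal{N}(a_j,{\cdot})\equiv 0$ reduces to the single identity $Q_d^{\gamma}([P])(a_j,{\cdot}) = -\lshad P,\tfrac{d}{d\varepsilon}a_j\rshad$. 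This I would establish at the level of graphs by the very mechanism behind Lemma~\ref{LemmaCocycleToCocycle} (which gives $\lshad P,Q^{\gamma}([P])\rshad=0$): expand~$\textsf{d}\gamma=0$ with one Poisson slot of~$Q^{\gamma}$ specialised to the $0$\/-\/vector~$a_j$; then the branches containing a direct argument $P(\,{\cdot}\,,a_j)$ drop out because $a_j$ is a Casimir of~$P$, i.e.\ $\lshad P,a_j\rshad=0$, and the branches proportional to $\lshad P,P\rshad$ drop out because~$P$ is Poisson; what survives is exactly the displayed identity, with the $0$\/-\/vector graph $\tfrac{d}{d\varepsilon}a_j$ of the conjecture playing the role of a trivialising Hamiltonian. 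Conceptually this is the infinitesimal form of ``along the $\gamma$\/-\/flow, Casimirs of~$P$ are carried to Casimirs''; the only delicate point here is the sign/combinatorial bookkeeping that matches precisely the graph~$\tfrac{d}{d\varepsilon}a_j$ of the statement rather than merely some Hamiltonian.

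To promote the quotient from ``smooth off the critical locus'' to ``differential polynomial'', I would invoke the standing hypothesis that the $\gamma$\/-\/flow restricts to the Nambu class: by Definition~\ref{DefNambuClassIsPreserved} there exist differential\/-\/polynomial velocities $\tfrac{d}{d\varepsilon}\varrho_0$ and $\tfrac{d}{d\varepsilon}a_i^{(0)}$ satisfying~\eqref{EqEvolAviaG}. Applying $(a_j,{\cdot})$ to that equation and using the computation above gives $Q_d^{\gamma}(a_j,{\cdot}) = -\lshad P,\tfrac{d}{d\varepsilon}a_j^{(0)}\rshad$; comparing with the graph identity just obtained shows that $b_j\mathrel{{:}{=}}\tfrac{d}{d\varepsilon}a_j-\tfrac{d}{d\varepsilon}a_j^{(0)}$ is a Casimir of $P(\varrho,[\ba])$, whence $\mathcal{N} = P\bigl(\tfrac{d}{d\varepsilon}\varrho_0,[\ba]\bigr) - \sum_j P\bigl(\varrho,[a_1],\ldots,[b_j],\ldots,[a_{d-2}]\bigr)$. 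It thus remains to prove that $\sum_j P(\varrho,\ldots,[b_j],\ldots)$ is a differential\/-\/polynomial multiple of the Nambu bi\/-\/vector whenever the~$b_j$ are Casimirs with polynomial coefficients; for this one wants (a) that such a Casimir is, as far as the relevant Jacobian determinants see it, a function of the tuple~$\ba$ alone --- so that $\dd b_j$ lies in $\mathrm{span}\{\dd a_1,\ldots,\dd a_{d-2}\}$ away from the zero locus of $\varrho\cdot\dd a_1\wedge\cdots\wedge\dd a_{d-2}$ --- and (b) that the ensuing division of Jacobian determinants goes through in the ring of differential polynomials.

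Step~(b) --- equivalently, the exact divisibility of $\mathcal{N}(f,g)$ by $\det(\dd(f,g,\ba)/\dd\bx)$ inside the differential\/-\/polynomial ring --- is the step I expect to be the main obstacle: it is precisely one of the promised ``new identities for the Jacobian determinants'' describing multivector\/-\/valued $GL(d)$\/-\/invariants on~$\BBR^d_{\text{aff}}$, and its difficulty is why the statement is phrased as a conjecture rather than a theorem. In the presently known cases --- $\gamma_3$ over $\BBR^3$ and~$\BBR^4$, and $\gamma_5$ over~$\BBR^3$, cf.\ Example~\ref{ExGFlowRestrictsToNambu} --- the required divisibility can be checked by a finite calculation with Nambu micro\/-\/graphs; a proof uniform over all dimensions~$d$ and all graph cocycles~$\gamma$ would need control over the syzygies among these Jacobian\/-\/determinant invariants, which at present we do not have.
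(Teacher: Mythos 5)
The statement you are proving is posed in the paper as an open conjecture: the authors give no proof, only the computational confirmation for $\gamma_3$ over $d=3,4$ and $\gamma_5$ over $d=3$ (Example~\ref{ExAdotRhoDotFromG}, citing \cite{skew21}). So there is no argument of theirs for your sketch to reproduce, and your text is, as you yourself signal, a strategy rather than a proof. The sound parts of it are the elementary reductions: the pointwise fact that a bi\/-\/vector $B$ with $\iota_{\Id a_j}B=0$ for all $j$ is, on the locus $\Id a_1\wedge\dots\wedge\Id a_{d-2}\neq0$, a function multiple of $P(1,[\ba])$, and the row\/-\/swap computation $P(\varrho,\ldots,[b]_{i},\ldots)(a_j,\cdot)=-\delta_{ij}\{b,\cdot\}_d$, so that the whole conjecture is equivalent to (i) $Q^\gamma_d([P])(a_j,\cdot)=-\lshad P,\tfrac{d}{d\varepsilon}a_j\rshad$ for the specific graph\/-\/defined $\tfrac{d}{d\varepsilon}a_j$, plus (ii) differential\/-\/polynomial (not merely smooth off the degenerate locus) divisibility of the remainder by the Jacobian.

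The genuine gaps are exactly at (i) and (ii), and they are not minor bookkeeping. For (i), the proposed derivation ``expand $\mathsf{d}\gamma=0$ with one Poisson slot specialised to the $0$-vector $a_j$, as in Lemma~\ref{LemmaCocycleToCocycle}'' is not carried out, and it is not a routine specialisation: replacing a bi\/-\/vector argument of the orientation morphism by a $0$-vector changes the degree and multiplicity combinatorics, and the precise statement you need --- with the Hamiltonian being exactly the sum of insertions of $a_j$ into the vertices of $\gamma$, not ``some'' Hamiltonian --- is essentially the Casimir half of Conjecture~\ref{ConjEvolARho} itself; in the known cases it is established only by explicit micro\/-\/graph computation, and if it followed formally from the cocycle condition uniformly in $\gamma$ and $d$, the conjecture would be largely settled, which it is not. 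For (ii), you correctly name the differential\/-\/polynomial divisibility as the main obstruction; note additionally that your third paragraph invokes the restriction hypothesis in the strong sense of Definition~\ref{DefNambuClassIsPreserved}, i.e.\ the prior existence of differential\/-\/polynomial velocities $\tfrac{d}{d\varepsilon}\varrho_0$, $\tfrac{d}{d\varepsilon}a_i^{(0)}$ satisfying~\eqref{EqEvolAviaG}; the conjecture's hypothesis (``the flow restricts to the class'') is weaker, so this step risks assuming a substantial part of what is to be proven, and the residual claim that $\sum_j P(\varrho,\ldots,[b_j],\ldots)$ with Casimir corrections $b_j$ is a differential\/-\/polynomial multiple of $P(\varrho,[\ba])$ is again an instance of the unresolved Jacobian\/-\/determinant identities this paper advertises as open.
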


\begin{ex}\label{ExAdotRhoDotFromG}
    The above conjecture is confirmed to be true for the tetrahedron graph cocycle $\gamma_3$ and dimensions $d=3,4$, and for the pentagon-wheel graph cocycle $\gamma_5$ and Nambu\/--\/Poisson structures over $d=3$.
\end{ex}


In the rest of this section we discuss the observed trivialisation of Kontsevich's graph cocycle flows $\dot{P}(\varrho,[\ba])=Q_d^{\gamma}([P])$ in the Lichnerowicz\/--\/Poisson second cohomology w.r.t.\ $\lshad P(\varrho,[\ba]),\cdot\rshad$, that is, we recall some evidence for the existence of vector field solutions $\vec{X}_d^{\gamma}([\varrho],[\ba])$ for the equations $Q_d^{\gamma}([P])=\lshad P,\vec{X}_d^{\gamma}\rshad$. (The known solutions $\vec{X}_{d=3,4}^{\gamma_3}$ are encoded by the Nambu micro-graphs but not by Kontsevich micro-graphs, as they do not stem directly from the previously known solutions $\vec{X}_{d=2}^{\gamma_3}$, see papers~\cite{MJB,FS}.)

\begin{lem}\label{LemmaNambuSkew}
In any dimension $d\geqslant3$, Nambu\/--\/Poisson bi-vectors in~\eqref{EqNambuCoord} are odd 
w.r.t.\ every Casimir $a_i$, namely 
$P(\varrho,\ldots,[-a_i],\ldots) = -P(\varrho,\ldots,[a_i],\ldots)$.\\[0.5pt]
$\bullet$\quad For all 
$d\geqslant4$, Nambu\/--\/Poisson bi-vectors in~\eqref{EqNambuCoord} are totally antisymmetric w.r.t.\ permutations $\sigma\in \mathbb{S}_{d-2}$ of the set of Casimirs $\ba$: we have 
$P(\varrho,[\sigma(\ba)]) = (-)^{\sigma}\cdot P(\varrho,[\ba])$.
\end{lem}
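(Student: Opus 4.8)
\quad Both statements are to be read off directly from the multilinearity and the alternating property of the determinant in formula~\eqref{EqNambuCoord}, so the argument is essentially a bookkeeping exercise with no analytic content. First I would write the Jacobian matrix $\dd\bigl(f,g,a_1,\ldots,a_{d-2}\bigr)\big/\dd\bigl(x^1,\ldots,x^d\bigr)$ explicitly as the $d\times d$ array whose $k$-th row is the coordinate gradient of the $k$-th entry of the tuple $(f,g,a_1,\ldots,a_{d-2})$: rows $1$ and $2$ are $\nabla f$ and $\nabla g$, and for $k\geqslant 3$ the row is $\bigl(\dd a_{k-2}/\dd x^1,\ldots,\dd a_{k-2}/\dd x^d\bigr)$. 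I would also record once and for all that the scalar prefactor $\varrho(\bx)$ is an independent datum of the Nambu structure, not a function of the Casimirs, so it is untouched in either part of the argument.

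For the first claim I would observe that the substitution $a_i\mapsto -a_i$ replaces the single row $\nabla a_i$ (the $(i+2)$-th row) by $-\nabla a_i$, since $\nabla$ is linear, while leaving every other row and $\varrho(\bx)$ unchanged; linearity of $\det$ in that row then multiplies the whole bracket by $-1$, which is exactly $P(\varrho,\ldots,[-a_i],\ldots)=-P(\varrho,\ldots,[a_i],\ldots)$. Equivalently, the same computation gives $P\bigl(\varrho,\ldots,[c\,a_i],\ldots\bigr)=c\cdot P\bigl(\varrho,\ldots,[a_i],\ldots\bigr)$ for any constant $c$, and one specialises to $c=-1$; this works for every $d\geqslant 3$, including $d=3$ where $a_1$ is the only Casimir.

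For the second claim, which requires $d\geqslant 4$ so that there are at least two Casimirs, I would note that a permutation $\sigma\in\mathbb{S}_{d-2}$ of $(a_1,\ldots,a_{d-2})$ permutes the last $d-2$ rows of the Jacobian matrix (rows $3,\ldots,d$) by $\sigma$ while fixing rows $1,2$ and the prefactor $\varrho$. Since $\det$ is alternating, permuting rows multiplies its value by the signature $(-)^{\sigma}$, giving $P(\varrho,[\sigma(\ba)])=(-)^{\sigma}P(\varrho,[\ba])$. I would first dispatch the case of a transposition of two adjacent Casimirs $a_i\leftrightarrow a_{i+1}$, where swapping the two corresponding rows flips the sign of the determinant, and then deduce the general statement from the facts that $\mathbb{S}_{d-2}$ is generated by adjacent transpositions and that the signature is a group homomorphism.

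I do not expect any genuine obstacle. The only points that demand care are purely clerical: the index offset by two between the Casimir labels $a_1,\ldots,a_{d-2}$ and the matrix rows $3,\ldots,d$, the remark that $\varrho$ is unaffected, and — for completeness — the observation that the same reasoning applies verbatim in the $N$-ary setting of Remark~\ref{RemNaryNambu} to the slots occupied by $a_{N-1},\ldots,a_{d-2}$. It is also worth emphasising in the write-up that the asserted (anti)symmetry concerns the dependence of $P\bigl(\varrho,[\ba]\bigr)$ on its \emph{defining data} $[\ba]$ rather than on its arguments $f,g$, which is precisely why the Casimirs are written in brackets in the statement; the bi-vector acting on $f,g$ is, separately, antisymmetric in $f\leftrightarrow g$ by the same determinant property.
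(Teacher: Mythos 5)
Your argument is correct and is exactly the reasoning the paper relies on (it leaves this lemma without a written proof, treating it as immediate from formula~\eqref{EqNambuCoord}): linearity of the determinant in the single row $\nabla a_i$ gives the oddness under $a_i\mapsto -a_i$, and the alternating property under permutation of the rows $3,\ldots,d$ gives the factor $(-)^{\sigma}$ for $d\geqslant 4$. Your clerical cautions (the row-index offset, $\varrho$ being untouched, and the distinction between the defining data $[\ba]$ and the arguments $f,g$) are all consistent with the paper's conventions.
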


\begin{rem}\label{RemGeneratorsGsymmetric}
    In contrast with the above lemma, the graph cocycle generators $\gamma_{2\ell+1}$ of $\mathfrak{grt}$ consist of $(2\ell+1)$-wheels and other graphs on $2\ell+2$ vertices; this number is even, whence 
$Q_{d\geqslant 3}^{\gamma_i}(P(\varrho,\ldots,[-a_i],\ldots))\equiv Q_{d\geqslant 3}^{\gamma_i}(P(\varrho,\ldots,[a_i],\ldots))$ and likewise, $Q_{d\geqslant 4}^{\gamma_i}(P(\varrho,\sigma(\ba)))\equiv Q_{d\geqslant 4}^{\gamma_i}(P(\varrho,[\ba]))$ for all $\sigma\in \mathbb{S}_{d-2}$. (The reasoning does not work for $[\gamma_3,\gamma_5]$ on 9 vertices and for other (iterated) commutators on an odd number of vertices.) This reveals that Kontsevich's graph cocycles $\gamma_i$, acting on Nambu\/--\/Poisson bi-vectors by infinitesimal deformations $\dot{P}=Q_d^{\gamma_i}([P])$, at once lose the structural property of these brackets.
\footnote{%
In fact, the Kontsevich deformation bi-vectors $Q_d^{\gamma_i}$ are well defined for the symplectic foliation of~$\mathbb{R}^d$, no matter how it is described by the level sets of the Casimirs~$a_i$ (or of~$-a_i$) or of their permutations, because the level sets of linear combinations~$A\ba$ define the same loci if~$\det(A)\neq0$.}
\end{rem}

This loss of structural property of~$P(\varrho,[\ba])$ by~$Q_d^{\gamma_i}([P])$ is an indirect but strong evidence that these graph cocycle flows are coboundaries over all $d\geqslant 3$ for each~$\gamma_i$.

\begin{ex}[see~\cite{skew23,MJB}]\label{ExG3trivDim3Dim4}
    The tetrahedral flow $\dot{P}=Q_d^{\gamma_3}([P])$ is a Poisson coboundary for the class of Nambu\/--\/Poisson brackets~\eqref{EqNambuCoord} over $d=3$ and~$d=4$.
\end{ex}

\begin{state}\label{PropCleverSystem}
Along any vector field $\vec{Y}=-\vec{X}\in\mathfrak{X}^1(\mathbb{R}^{d\geqslant 3})$ on~$\BBR^d$, the scalar functions $a_i$ evolve as fast as $\dot{a_i}=(-\vec{X})(a_i)$, and the evolution of $\varrho\cdot\partial_{\boldsymbol{x}}\in\mathfrak{X}^d(\mathbb{R}^d)$ is $\dot{\varrho}\,\partial_{\bx}=\lshad \varrho\partial_{\bx},\vec{X}\rshad$, which is standard. Now, the found vector fields $\vec{X}_d^{\gamma_3}$ trivialising the tetrahedral $\gamma_3$-flows of Nambu brackets~\eqref{EqNambuCoord} over $\mathbb{R}^3$ and $\mathbb{R}^4$ are such that
\begin{equation}\label{EqAdotRhoDotX}
\tfrac{d}{d \varepsilon} a_i = \bigl(-\vec{X}^{\gamma_3}_{d}\bigr)(a_i)
\qquad \text{and} \qquad 
\tfrac{d}{d \varepsilon}(\varrho)\,\dd_{\bx} = \lshad \varrho\,\dd_{\bx}, \vec{X}^{\gamma_3}_{d} \rshad.
\end{equation}
In other words, the evolution of Casimirs, obtained directly from the graph cocycle~$\gamma_3$ (see Conjecture~\ref{ConjEvolARho} and Example~\ref{ExAdotRhoDotFromG}), and the evolution of $d$-\/vector $\varrho\,\dd_{\bx}$, read from the $\gamma_3$-deformation of the Nambu bi\/-\/vector~$P\bigl(\varrho,[\ba]\bigr)$, agree with the law of evolution of zero-{} and $d$-vectors along the vector field which trivialises the $\gamma_3$-flow.\footnote{\label{FootVFaDotRhoDot}%
At $d=3$, equality~\eqref{EqAdotRhoDotX} is inspected after the trivialising vector field $\vec{X}^{\gamma_3}_{d=3}$ is found by solving the equation $Q^{\gamma_3}_{d=3}([P]) = \lshad P, \vec{X}^{\gamma_3}_{d=3}([P]) \rshad$.
At $d=4$, equations~\eqref{EqAdotRhoDotX} are solved for~$\vec{X}^{\gamma_3}_{d=4}$, and then the equality $Q^{\gamma_3}_{d=4}([P]) = \lshad P, \vec{X}^{\gamma_3}_{d=4}([P]) \rshad$ is confirmed.
Likewise, for the pentagon\/-\/wheel cocycle~$\gamma_5$, Eqs~\eqref{EqAdotRhoDotX} over~$\BBR^3$ are solved first.}
\end{state}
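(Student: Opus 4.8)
\medskip\noindent\textbf{Proof strategy.}\quad The plan is to split the claim into a dimension\/-\/independent conceptual part and explicit checks at $d=3$ and~$d=4$. First I would record the `standard' part: transport along the flow of $\vec Y=-\vec X$ acts on a scalar by $\mathcal{L}_{\vec Y}a_i=\vec Y(a_i)=(-\vec X)(a_i)$ and on the top\/-\/degree multivector by $\mathcal{L}_{\vec Y}(\varrho\,\dd_{\bx})=\lshad\vec Y,\varrho\,\dd_{\bx}\rshad=\lshad\varrho\,\dd_{\bx},\vec X\rshad$, the sign coming from graded antisymmetry of $\lshad{\cdot},{\cdot}\rshad$ on a $d$-\/vector and a $1$-\/vector. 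Since by Nambu's formula (Remark~\ref{RemNambuNonlin}) the bracket $P(\varrho,[\ba])=\lshad\ldots\lshad\varrho\,\dd_{\bx},a_1\rshad,\ldots,a_{d-2}\rshad$ is an iterated Schouten bracket and $\lshad\vec Y,{\cdot}\rshad$ is a degree\/-\/zero derivation of $\lshad{\cdot},{\cdot}\rshad$ (graded Jacobi identity), the Leibniz rule unfolds $\mathcal{L}_{\vec Y}\bigl(P(\varrho,[\ba])\bigr)$ into precisely the right\/-\/hand side of~\eqref{EqEvolAviaG} for the velocities $\dot a_i=(-\vec X)(a_i)$ and $\dot\varrho\,\dd_{\bx}=\lshad\varrho\,\dd_{\bx},\vec X\rshad$; while $\mathcal{L}_{\vec Y}(P)=\lshad-\vec X,P\rshad=\lshad P,\vec X\rshad$. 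Hence, as soon as $\vec X=\vec X^{\gamma}_d$ solves the trivialisation equation $Q^{\gamma}_d([P])=\lshad P,\vec X^{\gamma}_d\rshad$, these transport velocities do realise $\dot P=Q^{\gamma}_d$ in the sense of Definition~\ref{DefNambuClassIsPreserved}. This proves the first assertion of the Proposition for all $d\geqslant3$, and leaves only the coincidence, for $\gamma_3$ over $d=3,4$, of these transport velocities with the ones the graph cocycle produces in Conjecture~\ref{ConjEvolARho}.

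Second, I would reduce that coincidence to a single family of identities. Because $a_i$ is a Casimir, $\lshad P,a_i\rshad=0$, so any Hamiltonian field $\lshad P,H\rshad$ annihilates each~$a_i$; thus $(-\vec X)(a_i)$ is insensitive to the Hamiltonian ambiguity of the trivialising field. Moreover, once $\dot a_i$ is fixed equal to $(-\vec X^{\gamma_3}_d)(a_i)$, the fraction in Conjecture~\ref{ConjEvolARho} determines $\dot\varrho$ uniquely --- divide the already known bi\/-\/vector $Q^{\gamma_3}_d([P])-\sum_i P(\varrho,\ldots,[\dot a_i],\ldots)$ by $\det\bigl(\dd(f,g,\ba)\big/\dd\bx\bigr)$ --- and by the Leibniz computation above that forced $\dot\varrho\,\dd_{\bx}$ is exactly $\lshad\varrho\,\dd_{\bx},\vec X^{\gamma_3}_d\rshad$. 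So all of~\eqref{EqAdotRhoDotX} follows once one verifies, for $i=1,\ldots,d-2$ and $d=3,4$, that the velocity $\dot a_i$ which Conjecture~\ref{ConjEvolARho} reads directly off~$\gamma_3$ (a differential polynomial in $\varrho$ and~$\ba$) coincides with the directional derivative $\bigl(-\vec X^{\gamma_3}_d\bigr)(a_i)$.

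Third, the two dimensions. For $d=3$ (one Casimir) I would take the field $\vec X^{\gamma_3}_{d=3}$ found in~\cite{skew23} by solving $Q^{\gamma_3}_{d=3}([P])=\lshad P,\vec X^{\gamma_3}_{d=3}\rshad$ over Nambu micro\/-\/graphs, evaluate $(-\vec X^{\gamma_3}_{d=3})(a_1)$ as a differential polynomial in $\varrho,a_1$, expand it in the $GL(3)$-\/invariant micro\/-\/graph basis, and match it term by term with $\dot a_1$ of Conjecture~\ref{ConjEvolARho}; the equation for $\dot\varrho$ then follows from the second step. For $d=4$ (two Casimirs) I would reverse the order, following~\cite{MJB}: start from $\dot a_1,\dot a_2$ of Conjecture~\ref{ConjEvolARho} and the induced $\dot\varrho$, impose~\eqref{EqAdotRhoDotX} as a linear system on the coefficients of a $1$-\/vector micro\/-\/graph $\vec X^{\gamma_3}_{d=4}$ built of three Nambu sub\/-\/graphs --- an Ansatz suggested by the $d=4$ descendants of the `sunflower'~\eqref{EqSunflower} (cf.\ Remark~\ref{RemSunflowerGivesGraphs3D4D}) --- solve it, and only then confirm by a separate evaluation that this $\vec X^{\gamma_3}_{d=4}$ does satisfy $Q^{\gamma_3}_{d=4}([P])=\lshad P,\vec X^{\gamma_3}_{d=4}([P])\rshad$.

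The main obstacle will be the $d=4$ step. Once $\vec X^{\gamma_3}_{d=4}$ is required to be a genuine differential\/-\/polynomial (micro\/-\/graph) expression in $\varrho,a_1,a_2$, the system~\eqref{EqAdotRhoDotX} is over\/-\/determined and could \textit{a priori} be inconsistent; its solvability, the agreement of the two notions of $\dot a_i$, and the closing trivialisation check all rest on the new identities for the $GL(4)$-\/invariants $P(\varrho,[\ba])$ --- the vanishing and synonym relations among Nambu micro\/-\/graphs collected in~\S\ref{SecNambuMicroG} and in~\cite{MJB}. That is where the bulk of the computation concentrates; we carry it out in~\cite{MJB}, with the $d=3$ case settled in~\cite{skew23}. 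The dimension\/-\/independent argument of the first step then exhibits the pattern~\eqref{EqAdotRhoDotX} as nothing but the unfolding of $\mathcal{L}_{-\vec X}$ along Nambu's iterated\/-\/bracket formula, which is exactly what motivates the conjectural shape of $\dot\varrho$ and $\dot a_i$ for all $d\geqslant3$.
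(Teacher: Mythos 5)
Your proposal matches the paper's own route: the dimension\/-\/independent part (Lie\/-\/derivative transport of $a_i$ and $\varrho\,\dd_{\bx}$ unfolding, via the Leibniz rule for the Schouten bracket, into Eq.~\eqref{EqEvolAviaG}) is exactly what the paper dismisses as ``standard'', and your order of verification --- at $d=3$ first solve $Q^{\gamma_3}_{d=3}=\lshad P,\vec{X}^{\gamma_3}_{d=3}\rshad$ and then inspect~\eqref{EqAdotRhoDotX}, at $d=4$ solve~\eqref{EqAdotRhoDotX} for the micro\/-\/graph Ansatz first and then confirm the trivialisation --- is precisely the scheme stated in footnote~\ref{FootVFaDotRhoDot}, with the explicit computations deferred to \cite{skew23,MJB} just as the paper does. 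The proposal is correct and takes essentially the same approach.
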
 





   



\noindent\textbf{Conclusion.}\quad
For the infinitesimal deformations $\dot{P} = Q([P])$ of Poisson bi\/-\/vectors~$P$, the calculus of multivectors using Kontsevich and Nambu (micro-)\/graphs turns the PDE problem of deformations' (non)\/triviality in the Poisson cohomology into a problem from linear algebra. Yet the evaluation map~$\phi$, acting from (Nambu micro-)\/graphs $\Gamma$ to poly\/-\/linear polydifferential operators and then, by antisymmetrisation, to multivectors $\phi(\Alt(\Gamma))\in\mathfrak{X}^k(M^d_{\text{aff}})$, does have a nontrivial kernel, whence stem vanishing graphs, synonyms, and longer linear relations $\phi\bigl(\sum_i c_i \Gamma_i \bigr)=0$. The formulas which $\phi$~produces from (micro-)\/graphs are well defined w.r.t.\ affine changes $\mathbf{x}'(\bx) \rightleftarrows \bx(\mathbf{x}')$ locally on~$M^d_{\text{aff}}$. We pose the problem of effective description of multivector\/-\/valued invariants of the affine (essentially, only of $GL(d)$) group action on tensor fields over~$M^d_{\text{aff}}$, so that the new kernel is as small as possible.

It remains unclear why the Nambu class $\bigl\{ P(\varrho,[\ba]) \bigr\}$ is preserved by Kontsevich's graph cocycles~$\gamma$, namely why the cocycles~$\gamma$ yield the genuine evolution of Casimirs~$\ba$ and why the evolution of~$\varrho\,\dd_{\bx}$ is then well defined from~$Q^\gamma_d([P])$. The underlying mechanics of cross\/-\/terms cancellation looks similar to the noted preservation of identities $\phi\bigl(\sum_i c_i \Gamma_i \bigr)=0$ by the graph embeddings $\Gamma\hookrightarrow\smash{\widehat{\Gamma}}$ into dimension~$d+1$. By understanding the nature of (both) the mechanism(s), we shall gain deeper insight into the algebra and combinatorial topology of $GL(d)$-\/invariants.

We see that 
Nambu\/--\/Poisson brackets resist the Kontsevich graph action. We detect that the flows $\dot{P} = Q^\gamma_d([P(\varrho,[\ba])])$ are Poisson coboundaries: $Q^\gamma_d = \lshad P, \vec{X}^\gamma_d \rshad$, but the vector fields $\smash{\vec{X}^\gamma_{d\geqslant 3}}$ are not obtained from $d=2$ by mere expansion of Leibniz rules. The choice of Nambu micro\/-\/graphs for a solution $\smash{\vec{X}^\gamma_d}$ to appear is not yet codified; our preference of \textsl{the most natural} Ansatz for $\smash{\vec{X}^{\gamma_3}_{d=3,4}}$ in~\cite{MJB} is intuitive.


\medskip{\small 
\noindent\textbf{Acknowledgements.}\quad The authors thank the organisers of the international conference on Integrable Systems \&\ Quantum Symmetries (ISQS28) held on 1--5~July 2024 at CVUT Prague, Czech Republic, for stimulating discussions.
The authors are grateful to R.~Buring (INRIA Saclay, France) for the availability of \textsf{gcaops} software and support. The authors thank the Center for Information Technology of the University of Groningen for their support and for providing access to the H\'abr\'ok high performance computing cluster. The authors thank the University of Groningen for partial financial support.

A part of this research was done while AVK was visiting at the $\smash{\text{IH\'ES}}$, supported in part by the Nokia Fund. AVK~thanks the $\smash{\text{IH\'ES}}$ for hospitality, and thanks M.~Kontsevich for helpful discussions and advice.

}

\end{document}